\newtheorem{theorem}{Theorem}[section]
\newtheorem{prob}[theorem]{Problem}
\newtheorem{lem}[theorem]{Lemma}
\newtheorem{conj}[theorem]{Conjecture}
\newtheorem{ques}[theorem]{Question}
\theoremstyle{definition}
\newtheorem{ex}[theorem]{Example}
\newtheorem{definition}[theorem]{Definition}
\theoremstyle{remark}
\newtheorem{rem}[theorem]{Remark}
\newcommand{\N}{\mathbb{N}}
\newcommand{\Z}{\mathbb{Z}}
\newcommand{\R}{\mathbb{R}}
\newcommand{\sign}{\mathop{\mathrm{sign}}\nolimits}
\numberwithin{equation}{section}
\title{Annulus twist and diffeomorphic 4-manifolds II}
\author{Tetsuya Abe} 
\address{Department of Mathematics,
Tokyo Institute of Technology,
2-12-1 Ookayama, Meguro-ku, 
Tokyo 152-8551, Japan}
\email{abe.t.av@m.titech.ac.jp}
\author{In Dae Jong}
\address{
Department of Mathematics, 
Kinki University, 
3-4-1 Kowakae, Higashiosaka City,
Osaka 577-0818, Japan}
\email{jong@math.kindai.ac.jp}
\subjclass[2010]{57M25, 57R65}
\keywords{Kirby calculus; Annulus twist; Dehn surgery; 2-handle addition}
\begin{document}

\maketitle

\begin{abstract}
We solve a strong version of Problem 3.6 (D) in Kirby's list, that is, 
we show that for any integer $n$, there exist infinitely many mutually distinct knots 
such that $2$-handle additions along them with framing $n$ yield the same $4$-manifold. 
\end{abstract}

\section{Introduction}\label{sec:intro} 
For a knot $K$ in the $3$-sphere $S^3 = \partial B^4$, 
we denote by $M_K(n)$ the 3-manifold obtained from $S^3$ by $n$-surgery on $K$, 
and by $X_{K}(n)$ the smooth $4$-manifold obtained from 
$B^4$ by attaching a $2$-handle along $K$ with framing $n$. 
The symbol $\approx$ stands for a diffeomorphism.
In \cite{AJOT}, 
the authors, Omae, and Takeuchi asked the following problem,
a strong version of Problem 3.6 (D) in Kirby's list~\cite{Kirby} (see also Problem 2 in \cite{LO}). 

\begin{prob}\label{prob:Kirby}
Let $\gamma$ be an integer. 
Find infinitely many mutually distinct knots $K_1, K_2, \dots$ 
such that $X_{K_i}(\gamma) \approx X_{K_j}(\gamma) $ for each $i, j \in \N$. 
\end{prob}

In \cite{Ak1, Ak2},
Akbulut gave a partial answer to Problem~\ref{prob:Kirby} 
by finding a pair of distinct knots $K$ and $K'$ 
such that $X_{K}(\gamma) \approx X_{K'}(\gamma)$ for each $\gamma \in \Z$. 
Using an annulus twist introduced by Osoinach~\cite{Osoinach}, 
Problem~\ref{prob:Kirby} was solved affirmatively for $\gamma=0, \pm4$ in \cite{AJOT}. 

In this paper, we generalize an annulus twist in a somewhat unexpected way, 
and solve Problem~\ref{prob:Kirby} affirmatively by using the new operation. 

\begin{theorem}\label{thm:main}
For every $n \in \Z$, 
there exist distinct knots $J_0, J_1, J_2, \dots$ such that
\[ X_{J_0}(n) \approx X_{J_1}(n) \approx X_{J_2}(n) \approx \cdots \, . \]
\end{theorem}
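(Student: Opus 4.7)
The plan is to generalize the annulus twist of Osoinach in a way that, unlike the classical version which naturally controls the $0$-framed $4$-manifold $X_K(0)$, allows us to control $X_K(n)$ for arbitrary $n \in \Z$. The overall strategy parallels the one in \cite{AJOT}: build an explicit knot sitting inside a geometric configuration (an embedded annulus or a related surface together with some auxiliary framing data) on which a twisting operation is defined, show via Kirby calculus that the operation leaves the $2$-handle cobordism $X_K(n)$ unchanged up to diffeomorphism, and finally use a classical knot invariant to separate the resulting knots.

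First, I would define the generalized twist. The idea is to embed $K$ in the boundary of a regular neighborhood of an annulus $A$ (or an annulus-like surface), and to prescribe a framing datum that matches the target $n$. A single twist along $A$ then produces a new knot $K'$ whose meridian and longitude relative to the neighborhood have been altered in a controlled way; iterating this twist produces a sequence $J_0, J_1, J_2, \dots$. I would formulate this so that, in contrast to the $n=0$ case, the framing relative to $A$ is adjusted by twisting data that matches the desired $n$ -- this is presumably the ``unexpected'' generalization hinted at in the introduction.

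Next, I would verify that $X_{J_i}(n) \approx X_{J_{i+1}}(n)$ by Kirby calculus. The standard recipe is: present $X_{J_i}(n)$ by the Kirby diagram consisting of $J_i$ with framing $n$; introduce a cancelling $0$-framed unknotted $2$-handle/$3$-handle pair corresponding to the annulus $A$; perform handle slides of $J_i$ over the new $2$-handle (these slides encode the twist) to convert the diagram into one for $J_{i+1}$ with framing $n$; then cancel the $2$-handle/$3$-handle pair. The diffeomorphism $X_{J_i}(n)\approx X_{J_{i+1}}(n)$ follows because each move is a diffeomorphism of the corresponding $4$-manifold. I expect this to be the technical heart of the proof, and the step most sensitive to getting the framings right.

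Finally, I would distinguish the knots $J_i$. A natural choice is to compute the Alexander polynomial (or the genus) of $J_i$ and show that it strictly depends on $i$; since the generalized annulus twist acts nontrivially on the Seifert form of a suitably chosen base knot $J_0$, one should be able to arrange that infinitely many of the $J_i$'s are pairwise distinct. The main obstacles I anticipate are: (i) selecting a base knot $J_0$ and annulus configuration that makes the twist well-defined and nontrivial for every $n$ simultaneously, and (ii) carrying out the Kirby calculus so that the cancellation step produces exactly the diagram for $J_{i+1}$ with the original framing $n$, with no leftover twisting in the framing coefficient. Once both are managed, distinctness of the $J_i$'s is routine from a suitable invariant.
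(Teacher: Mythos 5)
Your overall outline (generalize the annulus twist, verify invariance of the trace by Kirby calculus, separate the knots by the Alexander polynomial) matches the paper's strategy in spirit, but there is a genuine gap at the step you call the technical heart. You propose to prove $X_{J_i}(n)\approx X_{J_{i+1}}(n)$ directly, by introducing a cancelling $2$-handle/$3$-handle pair, sliding $J_i$ over it, and cancelling, ``because each move is a diffeomorphism of the corresponding $4$-manifold.'' The operation the paper actually uses, $(*n)$, is the composition of an annulus twist with the operation $(T_n)$, which adds a $(-1/n)$-framed unknot and blows it down. A blow-down of a $(-1/n)$-framed unknot (a Rolfsen twist) is a move on surgery descriptions of $3$-manifolds only; for $|n|\ge 2$ it is not realized by $4$-dimensional handle moves, so the sequence of moves proves only $M_{K}(n)\approx M_{K'}(n)$ (the paper's Theorem~\ref{thm:diffeo3}), not a diffeomorphism of the traces. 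Your plan as stated has no mechanism for promoting this boundary diffeomorphism to the $4$-manifolds, and that promotion is precisely where the content lies.

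The paper closes this gap with Akbulut's carving technique (Lemma~\ref{lem:extend}, resting on Cerf's theorem $\Gamma_4=0$): one takes the boundary diffeomorphism $g:\partial X_K(n)\to\partial X_{K'}(n)$, checks that the image $g(\mu)$ of a $0$-framed meridian is a $0$-framed unknot in the diagram for $X_{K'}(n)$, and that replacing it by a dotted $1$-handle yields $B^4$ after a handle slide; only then does $g$ extend to $\widetilde g: X_K(n)\to X_{K'}(n)$. This verification is exactly what forces the extra hypothesis that the annulus presentation be \emph{simple} (Theorem~\ref{thm:diffeo4}), a restriction your proposal never anticipates. Your final step (distinctness via the Alexander polynomial) is the right idea, but note it too is not routine here: the paper must introduce \emph{good} annulus presentations, show they are preserved by $(*n)$, and track the degree of $\Delta_K(t)$ through the lifted-arc count (Lemma~\ref{lem:technical} and Lemma~\ref{lem:LiftOfArc}), with negative $n$ handled by passing to mirror images and $n=0$ quoted from earlier work. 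Without the carving/extension step, however, the proposed argument does not prove the theorem.
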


The knots $J_0$ and $J_1$ 
in Theorem~\ref{thm:main} (for $n>0$) are depicted in Figure~\ref{fig:8_20-1}.
In the figure, the rectangle labelled $n$ stands for $n$ times right-handed full twists.
Note that $J_0$ is the knot $8_{20}$ in Rolfsen's table~\cite{Rolfsen}. 

\begin{figure}[!htb]
\centering
\begin{overpic}[width=.65\textwidth]{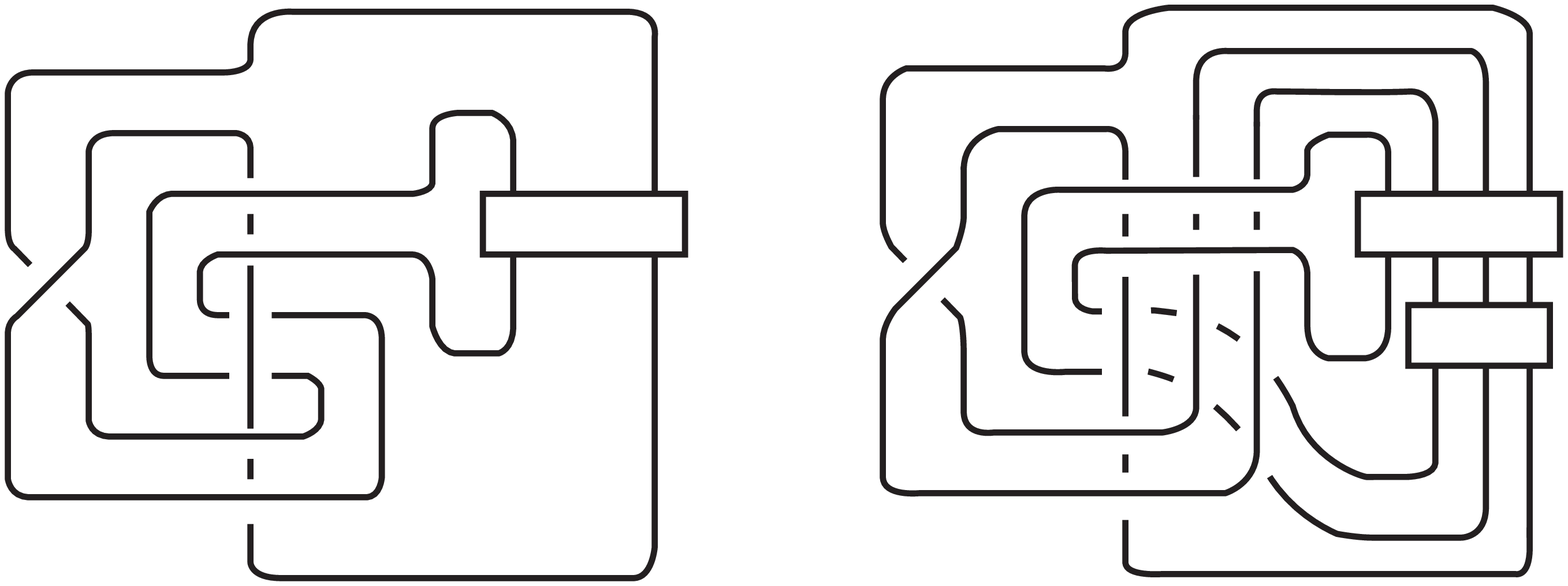}
\put(25,-4){$J_0$}
\put(82,-4){$J_1$}
\put(36.5,21.5){$1$}
\put(92.5,21.5){$1$}
\put(93.5,14.8){$n$}
\end{overpic}
\caption{The knots $J_0$ and $J_1$ such that $X_{J_0}(n) \approx X_{J_1}(n)$. }
\label{fig:8_20-1}
\end{figure}

This paper is organized as follows: 
In Section~\ref{sec:construction}, 
we recall the definition of an annulus presentation of a knot 
and introduce the notion of a ``simple'' annulus presentation. 
We define a new operation $(*n)$ on an annulus presentation, 
which is a generalization of an annulus twist. 
For a knot $K$ with an annulus presentation and an integer $n$, 
we construct a knot $K'$ (with an annulus presentation) such that 
$M_{K}(n) \approx M_{K'}(n)$ by using the operation $(*n)$ (Theorem~\ref{thm:diffeo3}). 
In Section~\ref{sec:extension}, 
for a knot $K$ with a simple annulus presentation and any integer $n$, 
we construct a knot $K'$ (with a simple annulus presentation) such that 
$X_{K}(n) \approx X_{K'}(n)$ by using the operation $(*n)$ (Theorem~\ref{thm:diffeo4}). 
Note that the two knots $K$ and $K'$ are possibly the same. 
In Section~\ref{sec:proof}, 
we introduce the notion of a ``good'' annulus presentation, and 
show that, for a given knot with a good annulus presentation, 
the infinitely many knots constructed by using the operation $(*n)$ 
have mutually distinct Alexander polynomials when $n \ne 0$ (Theorem~\ref{thm:main2}). 
This yields Theorem~\ref{thm:main} as an immediate corollary. 
In Appendix~\ref{sec:Cabling-conjecture}, 
we give a potential application of Theorem~\ref{thm:diffeo3} 
to the cabling conjecture.

\subsection*{Acknowledgments}
The authors would like to express their gratitude to 
Yuichi Yamada and other participants of 
handle seminar organized by Motoo Tange.
This paper would not be produced 
without Yamada's interest to annulus twists. 
The first author was supported by JSPS KAKENHI Grant Number 25005998.

\section{Construction of knots}\label{sec:construction}

\subsection{Annulus presentation}\label{subsec:AP} 
We recall the definition of an annulus presentation\footnote{In \cite{AJOT},
it was called a \emph{band presentation}.} of a knot. 
Let $A \subset \R^2 \cup \{ \infty \} \subset S^3$ be a trivially embedded annulus
with an $\varepsilon$-framed unknot $c$ in $S^3$ 
as shown in the left side of Figure~\ref{fig:Def-AP}, 
where $\varepsilon = \pm 1$. 
Take an embedding of a band $b: I \times I \to S^3$ such that 
\begin{itemize}
\item $b(I \times I) \cap \partial A = b(\partial I \times I)$, 
\item $b(I \times I) \cap \text{int} A$ consists of ribbon singularities, and
\item $b(I \times I)  \cap c= \emptyset$,
\end{itemize}
where $I = [0,1]$. 
Throughout this paper, we assume that $A \cup b(I \times I)$ is orientable. 
This assumption implies that the induced framing is zero (see \cite{AJOT}). 
Unless otherwise stated, we also assume for simplicity that $\varepsilon=-1$. 
If a knot $K$ in $S^3$ is isotopic to the knot 
$\left( \partial A \setminus b(\partial I \times I)\right) \cup b( I \times \partial I)$ 
in $M_{c}(-1) \approx S^3$, 
then we say that $K$ admits an \emph{annulus presentation} $(A,b,c)$.
It is easy to see that a knot admitting an annulus presentation 
is obtained from the Hopf link by a single band surgery (see~\cite{AJOT}). 
A typical example of a knot admitting an annulus presentation is given 
in Figure~\ref{fig:Def-AP}. 

\begin{figure}[!htb]
\centering
\begin{overpic}[width=1.0\textwidth]{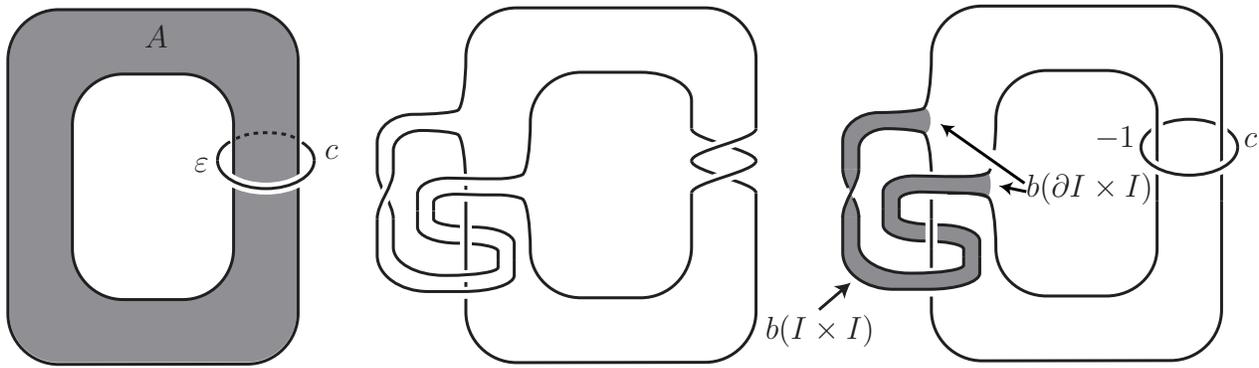}
\put(11.5,26){$A$}
\put(15.5,16){$\varepsilon$}
\put(26,17){$c$}
\put(61.5,2.5){$b(I \times I)$}
\put(82.5,14){$b(\partial I \times I)$}
\put(88,18){$-1$}
\put(100,18){$c$}
\end{overpic}
\caption{The knot depicted in the center admits an annulus presentation as in the right side.}
\label{fig:Def-AP}
\end{figure}

For an annulus presentation $(A,b,c)$, 
$\left( \R^2 \cup \{\infty\} \right) \setminus \textrm{int}A$ consists of 
two disks $D$ and $D'$, see Figure~\ref{fig:simple}. 
Assume that $\infty \in D'$. 

\begin{definition}\label{def:simple} 
An annulus presentation $(A,b,c)$ is called \textit{simple} if 
$b(I \times I) \cap \textrm{int} D = \emptyset$. 
\end{definition}

For example, in Figure~\ref{fig:simple}, 
the annulus presentation depicted in the center is simple, 
and the right one is not.

\begin{figure}[!htb]
\centering
\begin{overpic}[width=1.0\textwidth]{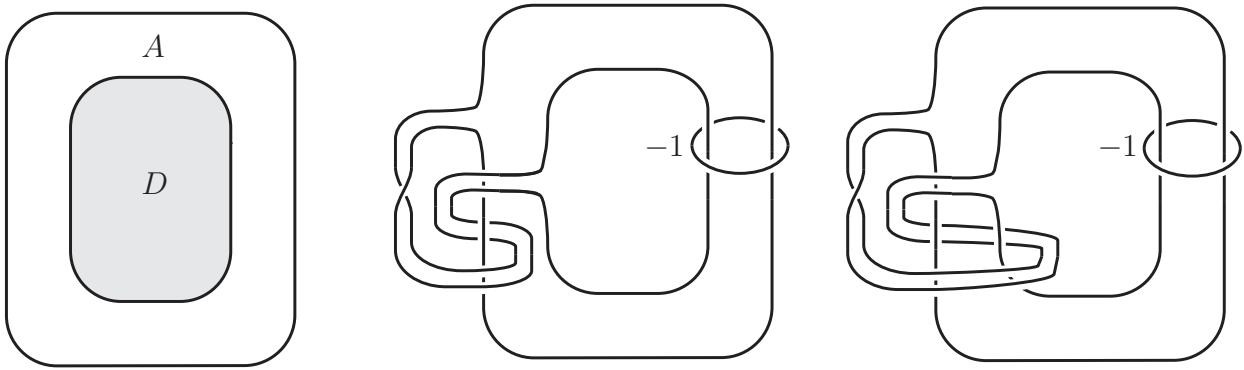}
\put(11,14){$D$}
\put(11,25){$A$}
\put(51.5,17){$-1$}
\put(88,17){$-1$}
\end{overpic}
\caption{The position of $D$, a simple annulus presentation and a non-simple annulus presentation.}
\label{fig:simple}
\end{figure}

Let $(A,b,c)$ be an annulus presentation of a knot. 
In a situation where it is inessential how the band $b(I \times I)$ is embedded, 
we often indicate $(A,b,c)$ in an abbreviated form as in Figure~\ref{fig:AP-abb}.

\begin{figure}[!htb]
\centering
\begin{overpic}[width=.25\textwidth]{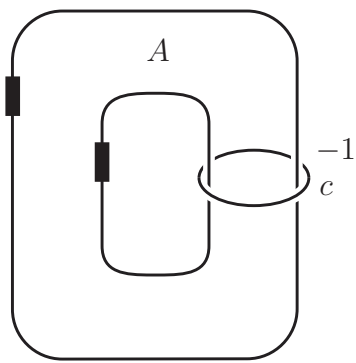}
\put(40,85){$A$}
\put(88,57){$-1$}
\put(89,47){$c$}
\end{overpic}
\caption{
Thick arcs stand for $b (\partial I \times I)$. }
\label{fig:AP-abb}
\end{figure}

\subsection{Operations}\label{subsec:operation}

To construct knots yielding the same $4$-manifold by a $2$-handle attaching, 
we define operations on an annulus presentation. 

\begin{definition}
Let $(A, b, c)$ be an annulus presentation, and $n$ an integer. 
\begin{itemize}
\item 
\emph{The operation $(A)$} is to apply an annulus twist\footnote{For the definition of an annulus twist, see~\cite[Section 2]{AT}.} along the annulus $A$. 
\item 
\emph{The operation $(T_n)$} is defined as follows: 
\begin{enumerate}
\item 
Adding the $(-1/n)$-framed unknot as in Figure~\ref{fig:Tn}, and 
\item 
(after isotopy) blowing down along the $(-1/n)$-framed unknot. 
\end{enumerate}
\item 
\emph{The operation $(*n)$} is the composition of $(A)$ and $(T_n)$. 
\end{itemize}
\end{definition}

In the operation $(T_n)$, 
the added $(-1/n)$-framed unknot is lying on the neighborhood of 
$c$ and $\partial A$, and does not intersect $b(I \times I)$. 
The intersection of $A$ and the added unknot is just one point. 

The operation $(*n)$ is a generalization of an annulus twist, in particular, $(*0) = (A)$.

\begin{figure}[!htb]
\centering
\begin{overpic}[width=.6\textwidth]{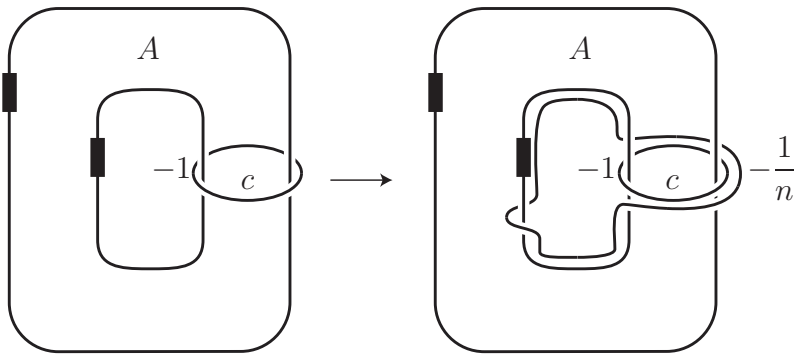}
\put(20,24){$-1$}
\put(77,24){$-1$}
\put(100,24){$-\dfrac1n$}
\put(18,40){$A$}
\put(76,40){$A$}
\put(32,22.5){$c$}
\put(89,22.5){$c$}
\end{overpic}
\caption{Add the $(-1/n)$-framed unknot in the operation $(T_n)$.}
\label{fig:Tn}
\end{figure}

\subsection{Construction}\label{subsec:Construction}

For a given knot $K$ with an annulus presentation, 
we can obtain a new knot $K'$ with a new annulus presentation 
by applying the operation $(*n)$. 
By abuse of notation, 
we call  $K'$ \emph{the knot obtained from $K$ by the operation $(*n)$}. 
Here we give examples. 

\begin{ex}\label{ex:1}
Let $J_{0}$ be the knot with the simple annulus presentation as in Figure~\ref{fig:*n}. 
Let $J_1$ be the knot obtained from $J_{0}$ by the operation $(*{n})$. 
Then $J_1$ is as in Figure~\ref{fig:*n}. 
\end{ex}
\begin{figure}[!htb]
\centering
\begin{overpic}[width=1.0\textwidth]{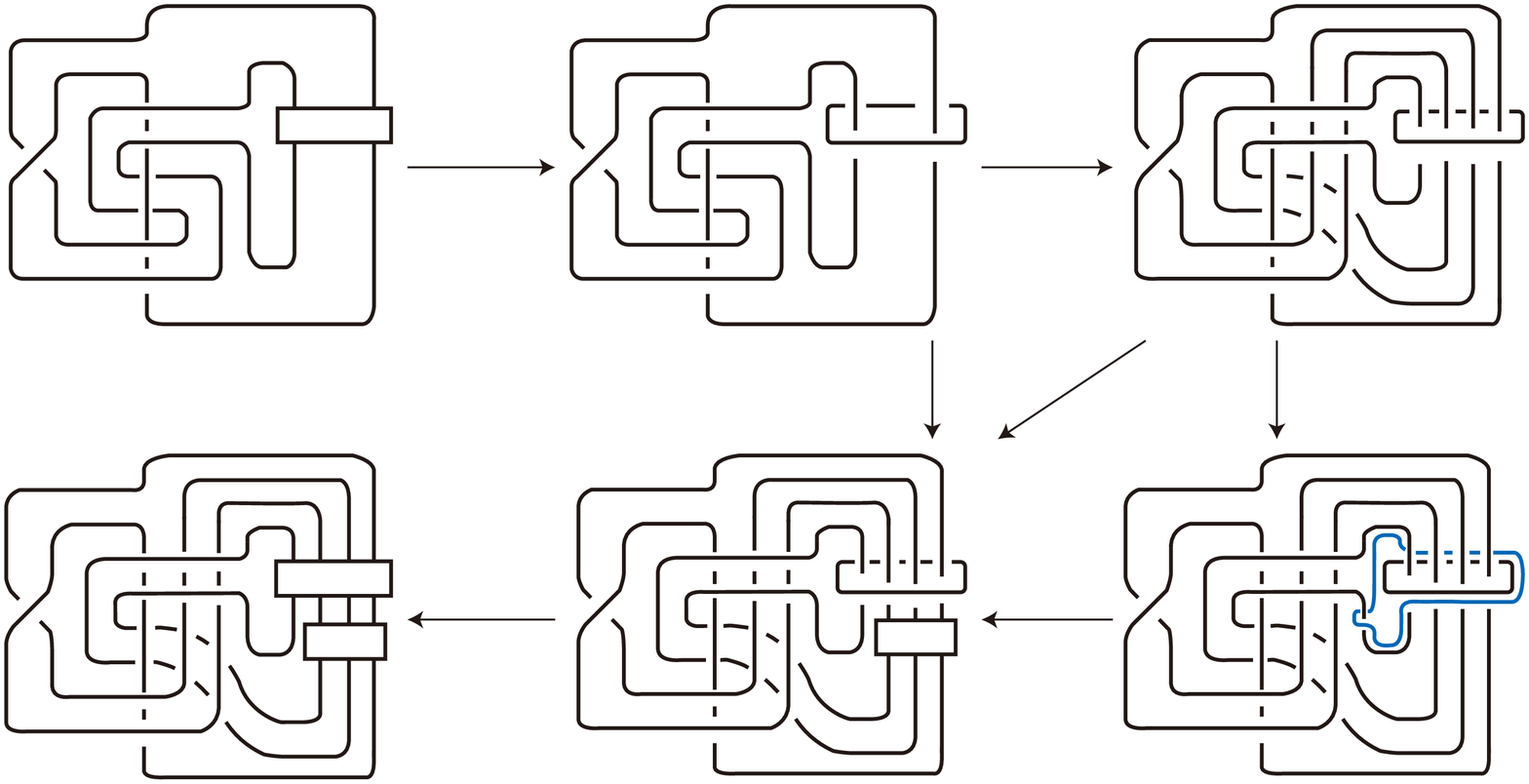}
\put(21.2,42){$1$}
\put(37,27.5){annulus presentation}
\put(37,-2){annulus presentation}
\put(26.5,41){blow up}
\put(62,44.5){$-1$}
\put(99,44.3){$-1$}
\put(62.3,14.5){$-1$}
\put(59.4,8.7){$n$}
\put(21.5,12.35){$1$}
\put(21.8,8.4){$n$}
\put(67,41){$(A)$}
\put(66,14.5){blow}
\put(66,12){down}
\put(29,14.5){blow}
\put(29,12){down}
\put(69.5,23){$(T_n)$}
\put(61.5,25.5){$(*n)$}
\put(11.5,27.5){$J_0$}
\put(11.5,-2.5){$J_1$}
\put(98.5,10){$\textcolor[cmyk]{1,0.5,0,0}{-\dfrac1n}$}
\end{overpic}
\vskip .3cm
\caption{By the operation $(*n)$, 
the knot $J_0$ with the annulus presentation is deformed into 
the knot $J_1$ with the annulus presentation.} 
\label{fig:*n}
\end{figure}

\begin{rem}
Let $K$ be a knot with an annulus presentation $(A, b, c)$, 
and $K'$ the knot obtained from $K$ by $(*n)$. 
If $(A, b, c)$ is simple, 
then the resulting annulus presentation of $K'$ is also simple. 
\end{rem}

\begin{ex}\label{ex:2}
For the knot $J_1$ in Example~\ref{ex:1} with $n = 1$, 
let $J_2$ be the knot obtained from $J_{1}$ by applying the operation $(*{1})$. 
Then $J_2$ is as in Figure~\ref{fig:ExK2}.
\end{ex}

\begin{figure}[!htb]
\centering
\begin{overpic}[width=\textwidth]{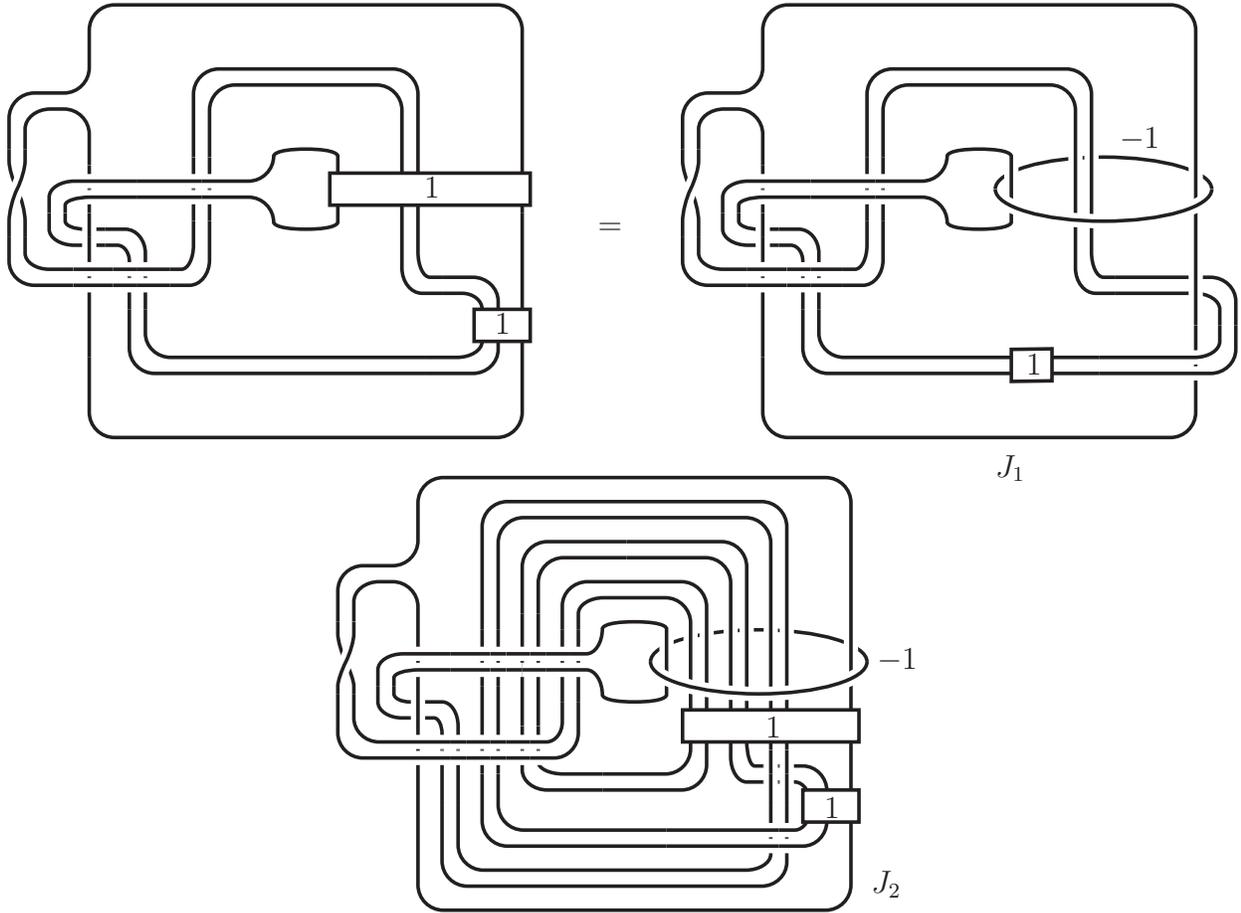}
\put(34,58){$1$}
\put(39.7,47){$1$}
\put(82.5,43.7){$1$}
\put(90,62){$-1$}
\put(70.5,20){$-1$}
\put(61.5,14.5){$1$}
\put(66.2,8){$1$}
\put(48,55){$=$}
\put(80,35.5){$J_1$}
\put(70,2){$J_2$}
\end{overpic}
\caption{An annulus presentation of the knot $J_2$ (lower half) obtained from $J_0$ 
by applying $(*1)$ two times. }
\label{fig:ExK2}
\end{figure}

The following lemma is obvious, however, important in our argument. 

\begin{lem}
Let $L$ be a $2$-component framed link 
which consists of $L_1$ with framing $(-1/n)$ and 
$L_2$ with framing $0$ as in the left side of Figure~\ref{fig:blow-down}. 
Suppose that the linking number of $L_1$ and $L_2$ is $\pm 1$ 
(with some orientation). 
Then two Kirby diagrams in Figure~\ref{fig:blow-down} 
represent  the same $3$-manifold. 
\end{lem}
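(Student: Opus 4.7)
The plan is to recognize the asserted equivalence as an instance of the classical Rolfsen twist (equivalently, a slam-dunk followed by an isotopy), applied to the $(-1/n)$-framed unknotted component $L_1$. Since the figure is not reproduced here, I will outline the argument so that it works regardless of the particular sign of the linking number shown in the figure.

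First, I would note that $L_1$ bounds an embedded disk $D \subset S^3$, and since $\lk(L_1,L_2) = \pm 1$, the component $L_2$ meets $D$ transversely in exactly one point. Performing $(-1/n)$-surgery on $L_1$ is, on the level of 3-manifolds, the same as cutting $S^3$ open along a tubular neighborhood of $D$ and regluing by a homeomorphism of the boundary torus that differs from the identity by $n$ full twists along $\partial D$. This reglueing is a diffeomorphism of $S^3$ to itself whose effect on the complementary link $L_2$ is exactly to insert $n$ full twists into the band of $L_2$ which passes through $D$.

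Second, I would track the framing of $L_2$ under this diffeomorphism. The standard Rolfsen-twist calculation (see, e.g., Gompf--Stipsicz) gives that when the $(-1/n)$-framed unknot is removed and $n$ full twists are added to every arc of $L_2$ piercing $D$, the framing of $L_2$ changes from $f$ to $f + n\,\lk(L_1,L_2)^2$. With $f=0$ and $\lk(L_1,L_2) = \pm 1$, this produces a new framing of $n$, and the sign of the inserted twist agrees with the sign of $-1/n$ relative to the chosen orientations. This is precisely the local move pictured on the right-hand side of Figure~\ref{fig:blow-down}.

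Combining these two points, blowing down along $L_1$ yields the diagram on the right, and because blow-down is realized by a diffeomorphism of the ambient 3-manifold, the two Kirby diagrams describe diffeomorphic 3-manifolds. The only mild subtlety, and the main thing I would be careful about, is matching the orientation conventions in the figure so that the $n$ full twists introduced in $L_2$ have the correct handedness; once this is pinned down, the lemma follows immediately from the Rolfsen twist formula.
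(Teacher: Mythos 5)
Your argument is correct: this is exactly the standard Rolfsen twist along the $(-1/n)$-framed unknot, with the framing change $0 \mapsto 0 + n\,\lk(L_1,L_2)^2 = n$, which is what the figure asserts. The paper offers no written proof (it calls the lemma obvious), and your write-up is precisely the intended justification behind the ``blow down'' step, so there is nothing further to add beyond the orientation bookkeeping you already flag.
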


\begin{figure}[!htb]
\centering
\begin{overpic}[width=.4\textwidth]{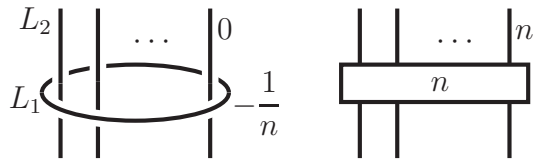}
\put(80,13.5){$n$}
\put(-5,10){$L_1$}
\put(-3,26){$L_2$}
\put(20,22){$\cdots$}
\put(81,22){$\cdots$}
\put(37,24){$0$}
\put(40,9){$-\dfrac1n$}
\put(97,24){$n$}
\end{overpic}
\caption{Two  Kirby diagrams represent  the same $3$-manifold. }
\label{fig:blow-down}
\end{figure}

\begin{theorem} \label{thm:diffeo3}
Let $K$ be a knot with an annulus presentation 
and $K'$ be the knot obtain from $K$ by the operation $(*n)$.
Then 
\[ M_{K}(n) \approx M_{K'}(n) \, . \] 
\end{theorem}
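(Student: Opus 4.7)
The plan is to realize the operation $(*n) = (A)(T_n)$ as a sequence of Kirby moves that transform a surgery diagram for $M_K(n)$ into one for $M_{K'}(n)$.

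Begin with the standard two-component surgery presentation of $M_K(n)$: the $(-1)$-framed unknot $c$ together with $K$ with framing $n$, where $K$ is drawn using its annulus presentation (so that $K$ lives in $M_c(-1) \approx S^3$). Using the preceding lemma in the reverse direction, drop the framing on $K$ from $n$ to $0$ while introducing an auxiliary $(-1/n)$-framed unknot $U$ linking $K$ once. Then isotope $U$ to the position prescribed in the definition of $(T_n)$: a loop in a neighborhood of $c \cup \partial A$, disjoint from $b(I\times I)$, and meeting $A$ transversally in a single point. Such an isotopy exists because $K$ is obtained as a band sum on $\partial A$, so a meridian of $K$ may be slid first along the band $b(I\times I)$ and then around $\partial A$ to the target location. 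The resulting diagram, with framed components $\{c,K,U\}$ of framings $\{-1,0,-1/n\}$, still presents $M_K(n)$.

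Now apply the annulus twist $(A)$, viewed as a self-diffeomorphism of $S^3$ preserving $A$ setwise and fixing $c$ (which lies outside a neighborhood of $A$). This sends $K$ to the annulus-twisted knot $K^{(A)}$ with surface framing still $0$, and it sends $U$ to an unknot $U^{(A)}$ of framing $-1/n$ that again meets $A$ once and links $K^{(A)}$ once; the 3-manifold presented is unchanged. Finally, blow down $U^{(A)}$ to complete $(T_n)$. By the preceding lemma in the forward direction, this restores the framing on $K^{(A)}$ to $n$, introduces $n$ right-handed full twists along a disk bounded by $U^{(A)}$, and leaves $c$ with framing $-1$ (since $c$ and $U^{(A)}$ are unlinked). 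The outcome is the two-component surgery diagram consisting of $c$ with framing $-1$ and $K'$ with framing $n$, where $K'$ is by construction the knot obtained from $K$ by $(*n)$; this is the surgery presentation of $M_{K'}(n)$, yielding the claimed diffeomorphism.

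The main technical point to verify is that the above interleaved sequence (add $U$, then $(A)$, then blow down $U^{(A)}$) really produces the same annulus-presentation picture of $K'$ as the textbook prescription of $(*n)$, which applies $(A)$ first to $K$ and then $(T_n)$ to the annulus presentation of $K^{(A)}$. This reduces to observing that $U$, in its $(T_n)$ position, is essentially a meridian of $\partial A$ and therefore invariant up to isotopy under the annulus-twist diffeomorphism, so $U^{(A)}$ may be identified with the $(-1/n)$-framed unknot that $(T_n)$ adds when applied in its natural order to the annulus presentation of $K^{(A)}$.
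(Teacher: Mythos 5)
Your overall skeleton is the same as the paper's: present $M_K(n)$ by the $(-1)$-framed $c$ together with the $n$-framed knot, trade the framing $n$ for $0$ by introducing a $(-1/n)$-framed unknot (the Lemma read backwards), bring that circle to the position required by $(T_n)$, perform the annulus twist, and blow down. The gap lies in the two steps where you assert that nothing needs to be proved. First, you claim that the $(-1/n)$-framed circle, which is created as a meridian of $K$, can be \emph{isotoped} in the complement of $K\cup c$ to the curve of Figure~\ref{fig:Tn}. It cannot: the result of a blow-down depends only on the isotopy class of the framed link, so if the $(T_n)$-circle were isotopic to a meridian of the $0$-framed knot, blowing it down would simply return $K$ with framing $n$, i.e.\ the operation $(T_n)$ would never change the knot type in $M_c(-1)$ --- contradicting, for instance, that $J_0\neq J_1$ (Lemma~\ref{lem:technical} and Theorem~\ref{thm:main2}). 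In the paper this repositioning is done by \emph{handle slides}, not isotopy: in Figures~\ref{fig:HM1} and~\ref{fig:HM3} the small circle is slid over the $(-1)$-framed $c$ (its framing visibly becomes $-1-\frac{1}{n}$ and later returns to $-\frac{1}{n}$), and these slides are exactly what makes the final curve different from a meridian. Your justification (``slide the meridian along the band and around $\partial A$'') keeps the curve a meridian of $K$ throughout, and at best shows it satisfies the loose verbal constraints in the definition of $(T_n)$, not that it becomes the specific curve of Figure~\ref{fig:Tn}.

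Second, you treat the annulus twist $(A)$ as a self-diffeomorphism of $S^3$ fixing $c$, so that ``the 3-manifold presented is unchanged'' comes for free. If such an ambient diffeomorphism existed, it would carry the framed link $(K,0)\cup(c,-1)$ to $(K^{(A)},0)\cup(c,-1)$, so $K$ and $K^{(A)}$ would be the same knot in $M_c(-1)\approx S^3$ and annulus twisting would never produce new knots, contrary to \cite{Osoinach} and \cite{AJOT}. The annulus twist is a cut-and-reglue along $A$; the fact that it does not change the presented manifold is special to the situation at hand (the knot has been made $0$-framed by the blow-up) and is precisely the content that must be verified by Kirby moves --- realizing the twist by $\pm1$-twists along the two boundary circles of $A$ and absorbing them by slides over the $0$-framed component, which is what the slide and isotopy steps of Figures~\ref{fig:HM1}--\ref{fig:HM3} carry out. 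The same mischaracterization reappears in your closing paragraph, where the $(T_n)$-circle is called ``essentially a meridian of $\partial A$'': if it were, its blow-down would only reframe a single strand and $(T_n)$ would again act trivially on knots. So the proposal outlines the correct strategy but assumes exactly the two points where the proof has real content.
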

\begin{proof}
First, 
we consider the case where $K = J_0 = 8_{20}$ with the usual annulus presentation 
as in Figure~\ref{fig:*n}. 
Figure~\ref{fig:HM1} shows that 
$M_{K}(n) $ is represented by the last diagram in Figure~\ref{fig:HM1}, 
and this is diffeomorphic to $M_{K'}(n)$ by Figure~\ref{fig:HM2}. 
The moves in Figure~\ref{fig:HM2} correspond to the operation $(*n)$.

\begin{figure}[!htb]
\centering
\begin{overpic}[width=\textwidth]{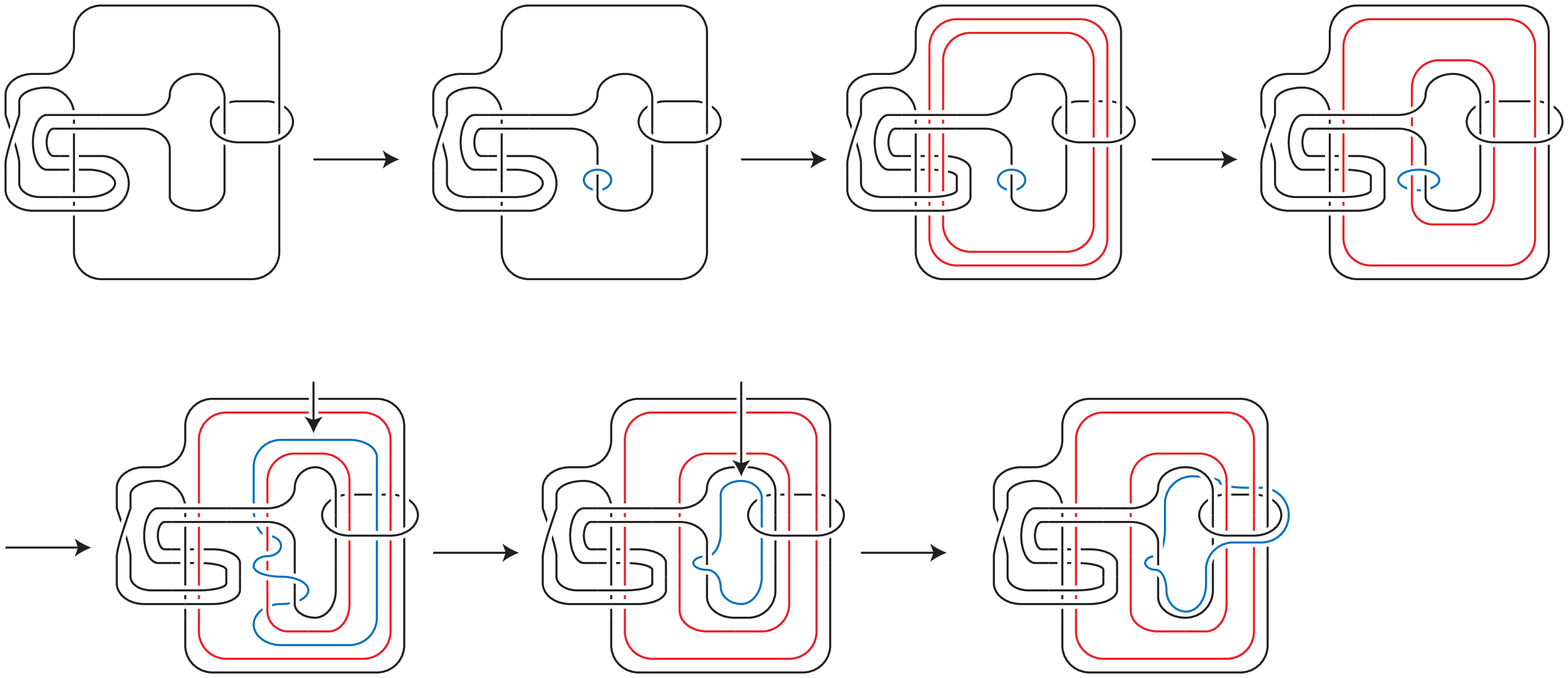}
\put(18.5,36){$-1$}
\put(46,36){$-1$}
\put(72.2,36){$-1$}
\put(99.5,36){$-1$}
\put(26.5,10.7){$-1$}
\put(53.7,10.7){$-1$}
\put(74.4,8){{\footnotesize $-1$}}
\put(18,42){$n$}
\put(45,42){$0$}
\put(71.5,42){$0$}
\put(99,42){$0$}
\put(26,16.5){$0$}
\put(53.3,16.5){$0$}
\put(81.2,16.5){$0$}
\put(34.5,28.5){\textcolor[cmyk]{1,0.5,0,0}{$-\frac{1}{n}$}}
\put(61,28.5){\textcolor[cmyk]{1,0.5,0,0}{$-\frac{1}{n}$}}
\put(86.5,28.5){\textcolor[cmyk]{1,0.5,0,0}{$-\frac{1}{n}$}}
\put(17,19.5){\textcolor[cmyk]{1,0.5,0,0}{$-1-\frac{1}{n}$}}
\put(45,19.5){\textcolor[cmyk]{1,0.5,0,0}{$-1-\frac{1}{n}$}}
\put(82.5,10){\textcolor[cmyk]{1,0.5,0,0}{$-\frac{1}{n}$}}
\put(58,37.2){\textcolor[cmyk]{0,1,1,0}{$1$}}
\put(60.5,37.2){\textcolor[cmyk]{0,1,1,0}{$-1$}}
\put(86,40){\textcolor[cmyk]{0,1,1,0}{$-1$}}
\put(95.4,30){\textcolor[cmyk]{0,1,1,0}{$1$}}
\put(13,15){\textcolor[cmyk]{0,1,1,0}{$-1$}}
\put(22.5,4){\textcolor[cmyk]{0,1,1,0}{$1$}}
\put(40.3,15){\textcolor[cmyk]{0,1,1,0}{$-1$}}
\put(50.5,4){\textcolor[cmyk]{0,1,1,0}{$1$}}
\put(69,15){\textcolor[cmyk]{0,1,1,0}{$-1$}}
\put(78.5,4){\textcolor[cmyk]{0,1,1,0}{$1$}}
\put(20,33.8){blow}
\put(21,31.5){up}
\put(73.8,33.8){slide}
\put(.5,9){slide}
\put(27,9){{\small isotopy}}
\put(55,8.8){slide}
\end{overpic}
\caption{A proof of  $M_{K}(n) \approx M_{K'}(n)$
when $K = 8_{20}$.}
\label{fig:HM1}
\end{figure}
\begin{figure}[!htb]
\centering
\begin{overpic}[width=.9\textwidth]{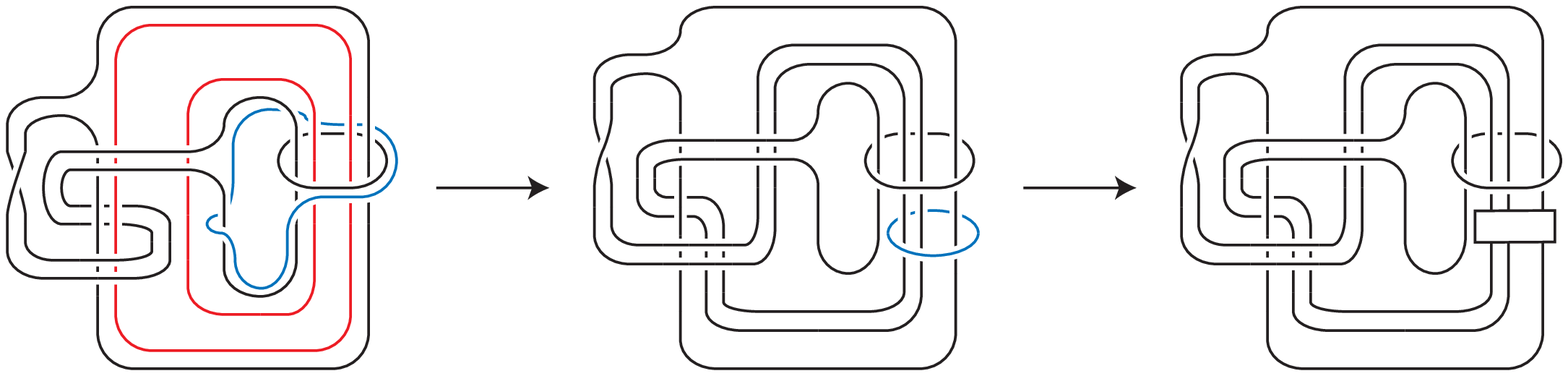}
\put(24,22){$0$}
\put(61,22){$0$}
\put(98.5,22){$n$}
\put(24,16){\textcolor[cmyk]{1,0.5,0,0}{$-\frac{1}{n}$}}
\put(62,7){\textcolor[cmyk]{1,0.5,0,0}{$-\frac{1}{n}$}}
\put(8,20){\textcolor[cmyk]{0,1,1,0}{$-1$}}
\put(20,7.2){\textcolor[cmyk]{0,1,1,0}{$1$}}
\put(15.2,11){{\footnotesize $-1$}}
\put(61.5,14.5){$-1$}
\put(99,14.5){$-1$}
\put(96,8.8){$n$}
\put(65.5,12.5){blow}
\put(65.5,9.7){down}
\end{overpic}
\caption{Moves which correspond to the operation $(*n)$.}
\label{fig:HM2}
\end{figure}

Next we consider a general  case.
Let $(A, b, c)$ be an annulus presentation of $K$. 
As seen in Figure~\ref{fig:HM3}, 
$M_{K}(n) $ is represented by the last diagram in Figure~\ref{fig:HM3}. 
Now it is not difficult to see that this is diffeomorphic to $M_{K'}(n)$. 
\begin{figure}[!htb]
\centering
\begin{overpic}[width=\textwidth]{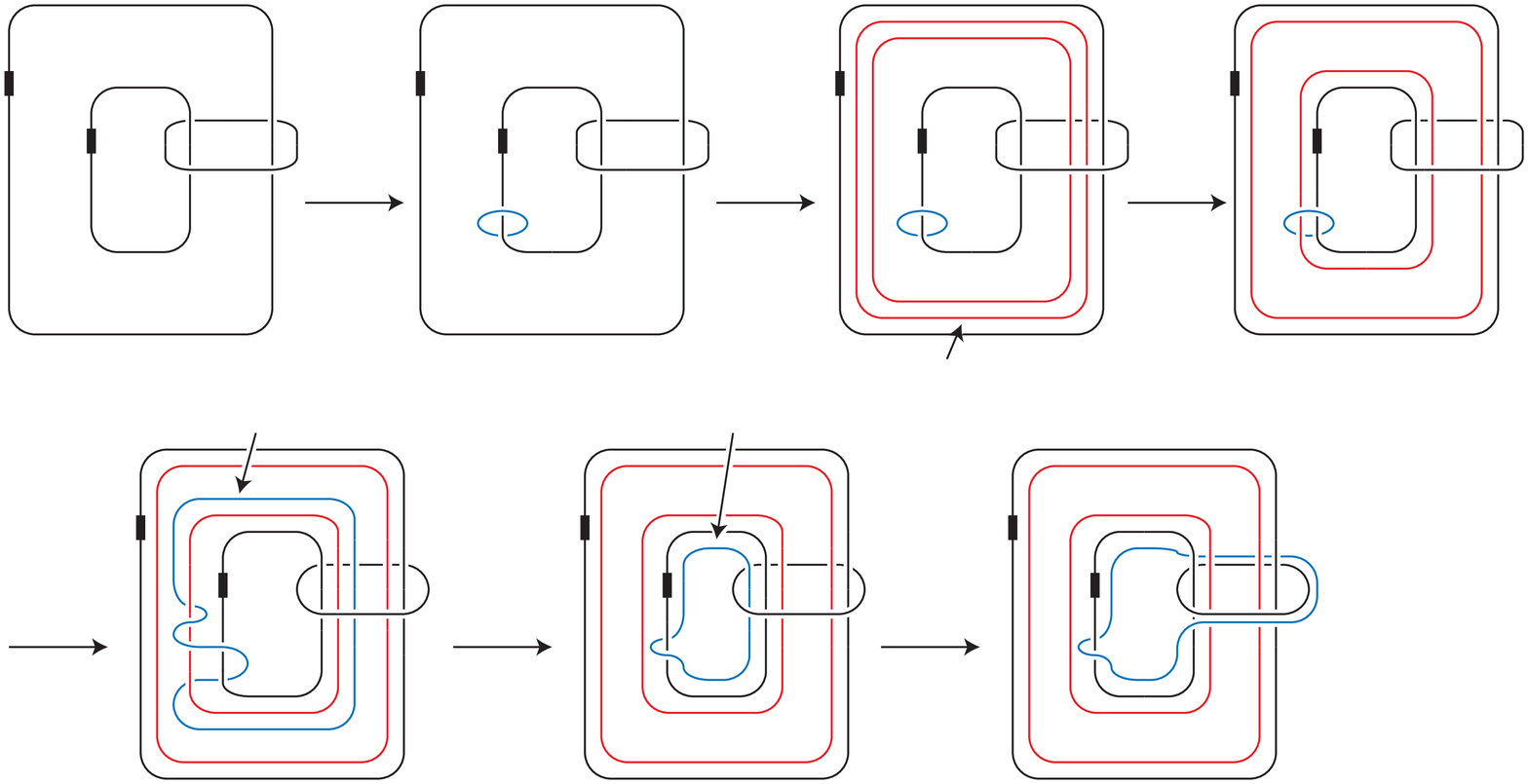}
\put(17.8,50){$n$}
\put(19.5,41.5){$-1$}
\put(44.8,50){$0$}
\put(46.8,41.5){$-1$}
\put(72.5,50){$0$}
\put(74.2,41.5){$-1$}
\put(98.5,50){$0$}
\put(100,41.5){$-1$}
\put(26.5,21){$0$}
\put(28,12){$-1$}
\put(55.5,21){$0$}
\put(57,12){$-1$}
\put(83.5,21){$0$}
\put(73.5,12){$-1$}
\put(34.5,36.5){\textcolor[cmyk]{1,.5,0,0}{$-\frac{1}{n}$}}
\put(62,36.5){\textcolor[cmyk]{1,.5,0,0}{$-\frac{1}{n}$}}
\put(87.3,36.5){\textcolor[cmyk]{1,.5,0,0}{$-\frac{1}{n}$}}
\put(12,24){\textcolor[cmyk]{1,.5,0,0}{$-1-\frac{1}{n}$}}
\put(43,24){\textcolor[cmyk]{1,.5,0,0}{$-1-\frac{1}{n}$}}
\put(86.5,12.5){\textcolor[cmyk]{1,.5,0,0}{$-\frac{1}{n}$}}
\put(61,32){\textcolor[cmyk]{0,1,1,0}{$-1$}}
\put(60.7,27){\textcolor[cmyk]{0,1,1,0}{$1$}}
\put(93,32){\textcolor[cmyk]{0,1,1,0}{$1$}}
\put(82.2,31){\textcolor[cmyk]{0,1,1,0}{$-1$}}
\put(12.6,15){\textcolor[cmyk]{0,1,1,0}{$1$}}
\put(10.5,1.7){\textcolor[cmyk]{0,1,1,0}{$-1$}}
\put(40.5,15){\textcolor[cmyk]{0,1,1,0}{$1$}}
\put(39.8,1.7){\textcolor[cmyk]{0,1,1,0}{$-1$}}
\put(68.6,15){\textcolor[cmyk]{0,1,1,0}{$1$}}
\put(67.8,1.7){\textcolor[cmyk]{0,1,1,0}{$-1$}}
\put(20,35.8){blow}
\put(20,33.5){up}
\put(74,35.8){slide}
\put(1,6.5){slide}
\put(29,6.5){isotopy}
\put(58,6.5){slide}
\end{overpic}
\caption{A proof of  $M_{K}(n) \approx M_{K'}(n)$ for a general case.}
\label{fig:HM3}
\end{figure}
\end{proof}

\begin{rem}\label{rem:simple_annulus_presentation}
Let $K$ be a knot with an annulus presentation $(A, b, c)$
and $K'$ be the knot obtain from $K$ by the operation ($*n$).
In general, $K'$ is much complicated than $K$. 
If the annulus presentation $(A, b, c)$ is simple, then $K'$ is not too complicated. 
Indeed, let $(A, b_A, c)$ be the annulus presentation 
obtained from $(A, b, c)$ by applying the operation $(A)$ 
as in the left side of Figure~\ref{fig:RemOperation}. 
Then the knot $K'$ is indicated as in the right side of Figure~\ref{fig:RemOperation}.
\end{rem}

\begin{figure}[!htb]
\centering
\begin{overpic}[width=.7\textwidth]{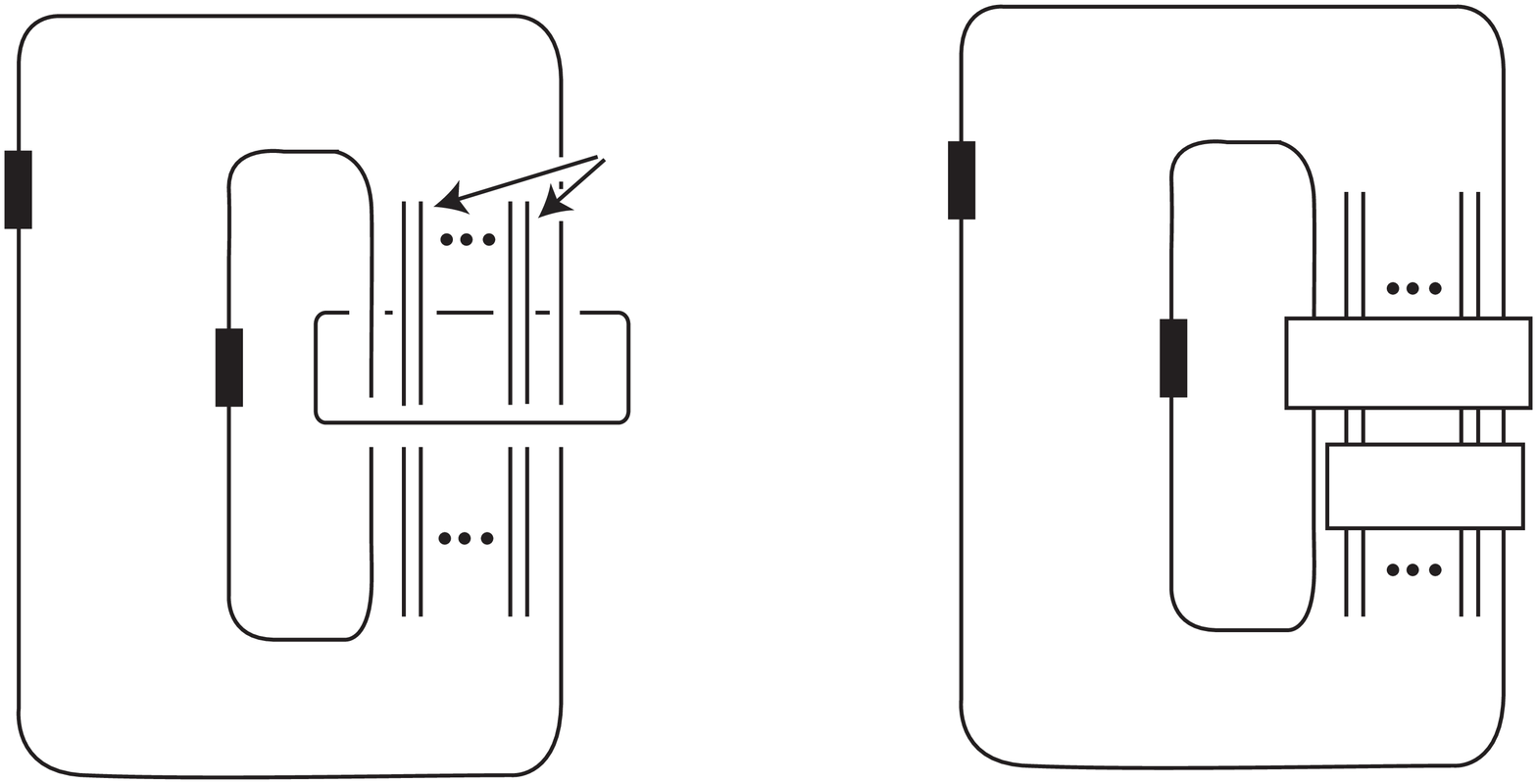}
\put(39.5,41){$b(I \times I)$}
\put(40.5,21){$c$}
\put(41.5,27){$-1$}
\put(89.5,26){$-1$}
\put(92,18.5){$n$}
\end{overpic}
\caption{The annulus presentation $(A, b_A, c)$ and the knot $K'$.}
\label{fig:RemOperation}
\end{figure}

\section{Extension of a diffeomorphism between 3-manifolds}\label{sec:extension} 

In his seminal work,
Cerf~\cite{Cerf} proved that $\Gamma_4=0$, that is, 
any orientation preserving self diffeomorphism of $S^3$
extends to a self diffeomorphism of $B^4$.
As an application of $\Gamma_4=0$, Akbulut obtained the following lemma. 

\begin{lem}[\cite{Ak2}]\label{lem:extend}
Let $K$ and $K'$ be knots in $S^3 = \partial D^4$ 
with a diffeomorphism $g : \partial X_{K}(n) \to \partial X_{K'}(n)$, 
and let $\mu$ be a meridian of $K$. 
Suppose that
\begin{enumerate}
\item 
if $\mu$ is $0$-framed, then $g(\mu)$ is the $0$-framed unknot in the Kirby diagram representing $X_{K'}(n)$, and 
\item 
the Kirby diagram $X_{K'}(n) \cup h^1$  represents $D^4$, where $h^1$ is the $1$-handle represented by  (dotted) $g(\mu)$. 
\end{enumerate}
Then $g$ extends to a diffeomorphism $\widetilde{g}: X_{K}(n) \to X_{K'}(n)$ 
such that $\tilde{g}|_{\partial X_{K}(n)}=g$. 
\end{lem}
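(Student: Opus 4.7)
The plan is to add a 1-handle to both $X_K(n)$ and $X_{K'}(n)$ so that each becomes $B^4$, extend $g$ across the new boundaries, apply Cerf's theorem, and then delete the 1-handles. First, I attach a 1-handle $h^1_\mu$ to $X_K(n)$ along the 0-framed dotted unknot $\mu$. Since $\mu$ is a meridian of the 2-handle along $K$, the pair $(h^1_\mu, K)$ is a canceling 1--2 handle pair, so $X_K(n) \cup h^1_\mu \approx B^4$. By hypothesis (2), the 4-manifold $X_{K'}(n) \cup h^1$ on the other side is likewise diffeomorphic to $B^4$.

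Next, I extend $g$ across the boundary regions affected by the 1-handles. Attaching a 1-handle in the dotted-circle convention modifies the boundary 3-manifold by 0-surgery along the dotted circle. Hypothesis (1) guarantees that $g$ carries the 0-framed meridian $\mu$ to the 0-framed unknot $g(\mu)$, so the two 0-surgeries correspond under $g$, and $g$ extends canonically to a diffeomorphism
\[ \bar{g} : \partial\bigl(X_K(n) \cup h^1_\mu\bigr) \longrightarrow \partial\bigl(X_{K'}(n) \cup h^1\bigr). \]
Under identifications of both sides with $\partial B^4 = S^3$, $\bar g$ becomes a self-diffeomorphism of $S^3$, and by Cerf's theorem $\Gamma_4 = 0$ it extends to a diffeomorphism
\[ \Phi : X_K(n) \cup h^1_\mu \longrightarrow X_{K'}(n) \cup h^1 \]
realizing a self-diffeomorphism of $B^4$.

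Finally, I arrange $\Phi(h^1_\mu) = h^1$ so that cutting out the 1-handles yields the desired $\tilde g$. Since $\bar g$ matches the attaching sphere $S^0 \subset \partial$ of $h^1_\mu$ with that of $h^1$, the image $\Phi(h^1_\mu)$ is a 1-handle in $B^4$ sharing its attaching sphere with $h^1$. By the uniqueness of a 1-handle in $B^4$ with a prescribed attaching sphere, up to ambient isotopy rel $\partial$, I can compose $\Phi$ with such an isotopy to arrange $\Phi(h^1_\mu) = h^1$. The restriction $\tilde g := \Phi|_{X_K(n)}$ is then a diffeomorphism $X_K(n) \to X_{K'}(n)$, and by construction $\tilde g|_{\partial X_K(n)} = g$.

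I expect the main obstacle to be this last step of handle bookkeeping: Cerf's theorem gives an extension of $\bar g$ only up to ambient isotopy of $B^4$, so one has to verify that among such extensions we may choose one respecting the 1-handle, which is what licenses removing the 1-handle to recover a genuine diffeomorphism $\tilde g$ of the original 4-manifolds.
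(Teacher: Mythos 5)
Your overall architecture is the right one, and it is essentially the carving argument that the paper points to (Akbulut's trick, \cite[Lemma 2.9]{AJOT}): pass to the diagram with the dotted circle, observe both sides become $B^4$, match the boundaries via hypothesis (1), invoke Cerf's $\Gamma_4=0$, and then recover the original $2$-handlebodies. The gap is in your final step, and it comes from misreading what ``$X_{K}(n)\cup h^1$'' means in the dotted-circle convention. A dotted circle is a $1$-handle attached \emph{before} the $2$-handles; equivalently, $W_K:=X_K(n)\cup h^1_\mu$ is obtained from $X_K(n)$ by \emph{carving}, i.e.\ deleting a tubular neighborhood $\nu(D_\mu)$ of a properly embedded disk bounded by $\mu$ (here the cocore of the $2$-handle), and likewise $W_{K'}=X_{K'}(n)\setminus \nu(D')$ with $\partial D'=g(\mu)$. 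So the containment goes the opposite way from what you use: $W_K\subset X_K(n)$, not $X_K(n)\subset W_K$. Indeed, literally attaching a $1$-handle to $X_K(n)$ along an $S^0$ in its boundary gives $X_K(n)\natural (S^1\times D^3)$, which has $\pi_1\cong\mathbb{Z}$ and can never be $B^4$; your own canceling-pair computation (and hypothesis (2) on the other side) only makes sense in the carving/handles-in-order reading. Consequently ``$\tilde g:=\Phi|_{X_K(n)}$'' does not parse: $X_K(n)$ is not contained in the domain of $\Phi$, and deleting the open $1$-handle from $W_K$ does not return $X_K(n)$ because the $2$-handle runs over that $1$-handle.

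The correct final step is an \emph{extension}, not a restriction: $X_K(n)=W_K\cup\nu(D_\mu)$ and $X_{K'}(n)=W_{K'}\cup\nu(D')$, where regluing $\nu(D)\cong D^2\times D^2$ amounts to attaching a $2$-handle along the dual (belt) circle of the carved disk with the framing induced by the disk. Since $g$ sends the $0$-framed $\mu$ to the $0$-framed $g(\mu)$, the induced boundary map $\bar g$ (chosen to be the product map on the new surgery solid tori) carries this framed dual circle on the left to the one on the right, so the Cerf extension $\Phi: W_K\to W_{K'}$ extends over the reglued $D^2\times D^2$'s to the desired $\tilde g:X_K(n)\to X_{K'}(n)$, and with this choice one checks $\tilde g|_{\partial X_K(n)}=g$ on $\nu(\mu)$ as well as outside it. Your appeal to ``uniqueness of a $1$-handle in $B^4$ with prescribed attaching sphere'' is not the relevant tool; what the argument actually needs is exactly this matching of framed dual circles, which is where hypothesis (1) is consumed.
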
 

This technique is called ``carving'' in~\cite{AkbulutBook}. 
For a proof, we refer the reader to \cite[Lemma 2.9]{AJOT}. 
Applying Lemma~\ref{lem:extend}, we show the following.

\begin{theorem}\label{thm:diffeo4} 
Let $K$ be a knot with a simple annulus presentation 
and $K'$ be the knot obtain from $K$ by the operation $(*n)$. 
Then $X_{K}(n) \approx X_{K'}(n)$.  
\end{theorem}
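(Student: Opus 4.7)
The plan is to apply Akbulut's extension lemma (Lemma~\ref{lem:extend}) to the boundary diffeomorphism produced by Theorem~\ref{thm:diffeo3}. That theorem's proof yields an explicit diffeomorphism $g\colon \partial X_{K}(n)\to \partial X_{K'}(n)$ realised by the sequence of blow-ups, handle slides, isotopies, and a single blow-down in Figures~\ref{fig:HM1}--\ref{fig:HM3}, and the task is to verify the two hypotheses of Lemma~\ref{lem:extend} under the assumption that $(A,b,c)$ is simple. Once this is done, $g$ extends to a diffeomorphism $\widetilde g\colon X_{K}(n)\to X_{K'}(n)$ as required.

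First I would adjoin a $0$-framed meridian $\mu$ of $K$ to the Kirby diagram of $X_{K}(n)$, chosen to bound a small disk lying on the $D$-side of the annulus $A$. The simplicity assumption $b(I\times I)\cap \mathrm{int}\,D=\emptyset$ guarantees that $c$, the band $b(I\times I)$, and the auxiliary $(-1/n)$-framed circle introduced by the operation $(T_n)$ all sit in a neighbourhood of $A\cup c$ on the opposite side of $A$ from $D$. Hence $\mu$ can be placed so that it links $K$ once and is disjoint from every other component of the diagram. I would then carry $\mu$ through the moves in Figures~\ref{fig:HM1}--\ref{fig:HM3}; since each of these moves is supported in a small neighbourhood of $A\cup c\cup b(I\times I)$, none of them touches $\mu$. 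Therefore in the final picture representing $X_{K'}(n)$, $g(\mu)$ is visibly a $0$-framed unknot bounding an obvious disk in its complement, which establishes hypothesis~(1) of Lemma~\ref{lem:extend}.

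Next I would verify hypothesis~(2), namely that converting $g(\mu)$ into a dotted $1$-handle yields a Kirby diagram of $D^4$. Because $g(\mu)$ is a meridian of $K'$, the resulting dotted $1$-handle and the $2$-handle attached along $K'$ form a canceling pair; after cancellation only the (isotoped image of the) original $(-1)$-framed unknot $c$ and the material needed to produce $K'$ remain. Using Remark~\ref{rem:simple_annulus_presentation} to read off the explicit form of $K'$ (and its annulus presentation, which remains simple), one sees that the ribbon disk for $K'$ implicit in the simple annulus presentation provides the remaining cancellations, collapsing the diagram to $D^4$ via standard Kirby moves.

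The main technical obstacle is the bookkeeping in the first step: one must check, diagram by diagram, that the handle slides and the final blow-down of the $(-1/n)$-framed unknot do not inadvertently entangle $\mu$ with $K'$ or change its framing. The crucial point is that simplicity localises every move of the operation $(*n)$ to the $D'$-side of $A$, so $\mu$, placed in a collar of $\partial D$, can be kept rigid throughout. Granted this localisation, conditions (1) and (2) of Lemma~\ref{lem:extend} follow from the canceling-pair argument above, and the lemma delivers $\widetilde g\colon X_{K}(n)\to X_{K'}(n)$, completing the proof.
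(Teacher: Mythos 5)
Your overall strategy coincides with the paper's: take the boundary diffeomorphism supplied by Theorem~\ref{thm:diffeo3} and feed it into Lemma~\ref{lem:extend}, so the whole proof reduces to checking the two hypotheses of that lemma. The gap is in how you check them. Your verification of both hypotheses rests on the claim that $\mu$ can be placed so that every move in Figures~\ref{fig:HM1}--\ref{fig:HM3} misses it, and hence that $g(\mu)$ is literally a meridian of $K'$, giving an immediate canceling pair. This is not justified, and it is not what happens: $\mu$ is a meridian of $K$, so it lies in a tubular neighborhood of $K\subset \partial A\cup b(I\times\partial I)$, i.e.\ exactly in the region where the annulus twist $(A)$, the $(T_n)$ twisting, and the final blow-downs of $c$ and of the $(-1/n)$-circle are supported; the blow-downs twist precisely the strands of the (slid) attaching curve passing through those spanning disks, and whether the image of $\mu$ survives as a small meridian has to be checked diagram by diagram. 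The paper does this tracking explicitly (Figures~\ref{fig:Proof4-1} and \ref{fig:Proof4-3}), and the outcome is a $0$-framed unknot that is \emph{not} a meridian of $K'$: this is visible from the fact that hypothesis (2) is then verified not by an instant cancellation but by an explicit handle slide of $h^2$ over $h^1$ (Figures~\ref{fig:Proof4-2} and \ref{fig:Proof4-4}), which changes the framing from $n$ to $n-2$ before a geometrically canceling pair appears. If $g(\mu)$ really were a $0$-framed meridian of $K'$, that slide would be unnecessary and the theorem would be essentially contentless; the identification of $g(\mu)$ and the cancellation argument are the technical heart of the proof, and your proposal assumes them away.

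Two further points. First, your description of the carved manifold $W$ is off: $W=D^4\cup h^1\cup h^2$ has exactly one $1$-handle (the dotted $g(\mu)$) and one $2$-handle (along $K'$ with framing $n$); there is no leftover $(-1)$-framed $c$ in the diagram of $X_{K'}(n)$, so the appeal to ``the ribbon disk for $K'$ providing the remaining cancellations'' is both unnecessary under your (unproved) meridian claim and, as stated, not a Kirby-calculus verification of $W\approx B^4$. Second, simplicity enters the paper's proof not through a placement of $\mu$ on the $D$-side, but through Remark~\ref{rem:simple_annulus_presentation}: it keeps $K'$ and the final diagrams in a controlled form so that $g(\mu)$ can be identified and the single slide exhibiting the canceling pair can be found. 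To repair your argument you would need to carry $\mu$ through Figures~\ref{fig:HM1}--\ref{fig:HM3} honestly and then prove $W\approx B^4$ for the resulting curve, which is exactly what the paper's Figures~\ref{fig:Proof4-1}--\ref{fig:Proof4-4} do.
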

\begin{proof}
First, 
we consider the case where $K = 8_{20}$ with the usual simple annulus presentation.
Let $f : \partial X_{K}(n) \to  \partial X_{K'}(n)$ 
be the diffeomorphism given in Figures~\ref{fig:HM1} 
and \ref{fig:HM2}. 
Let $\mu$ be the meridian of $K$. 
If we suppose that $\mu$ is $0$-framed, 
then we can check that $f(\mu)$ is the $0$-framed unknot 
in the Kirby diagram of $X_{K'}(0)$ as in Figure~\ref{fig:Proof4-1}. 
Let $W$ be the $4$-manifold $D^4 \cup h^1 \cup h^2$, 
where $h^1$ is the dotted $1$-handle represented by $f(\mu)$ and $h^2$ 
is the $2$-handle represented by $K'$ with framing $n$.
Sliding $h^2$ over $h^1$, we obtain a canceling pair 
(see Figure~\ref{fig:Proof4-2}), implying that $W \approx B^4$.
By Lemma~\ref{lem:extend}, we have $\tilde{f} : X_{K}(0) \approx X_{K'}(0)$. 

Next, We consider a general case.
Let $g : \partial X_{K}(n) \to  \partial X_{K'}(n)$ 
be the diffeomorphism given in the proof of Theorem~\ref{thm:diffeo3} in a general case  (see Figure~\ref{fig:Proof4-3}), 
and $\mu$ the meridian of $\partial X_{K}(n)$. 
In Figure~\ref{fig:Proof4-3}, 
the annulus presentation in the right hand side represents $K'$, 
see Remark~\ref{rem:simple_annulus_presentation}. 
If we suppose that $\mu$ is $0$-framed, 
then we can check that $g(\mu)$ is the $0$-framed unknot 
in the Kirby diagram of $X_{K'}(0)$ as in Figure~\ref{fig:Proof4-3}. 
Let $W$ be the $4$-manifold $D^4 \cup h^1 \cup h^2$, 
where $h^1$ is the dotted $1$-handle represented by $g(\mu)$ and $h^2$ 
is the $2$-handle represented by $K'$ with framing $n$. 
Sliding $h^2$ over $h^1$, we obtain a canceling pair 
(see Figure~\ref{fig:Proof4-4}), implying that $W \approx B^4$.
By Lemma~\ref{lem:extend} again, we have $\tilde{g} : X_{K}(0) \approx X_{K'}(0)$.
\end{proof}

\begin{figure}[!htb]
\centering
\begin{overpic}[width=.75\textwidth]{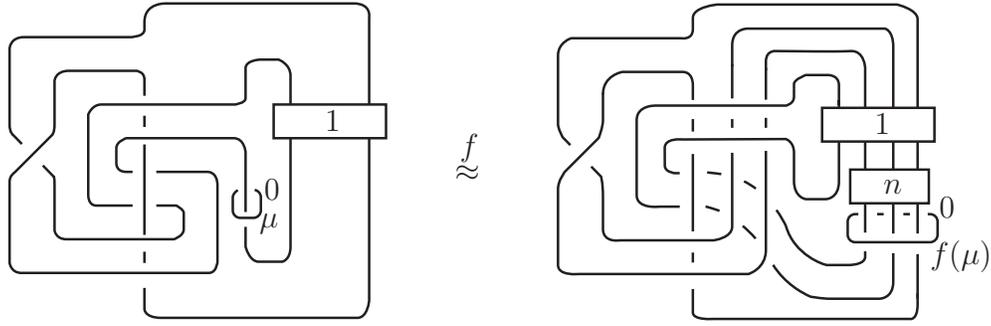}
\put(27.5,13){$0$}
\put(27,10){$\mu$}
\put(34,20.3){$1$}
\put(93,20){$1$}
\put(94,13.6){$n$}
\put(100,11){$0$}
\put(99,6){$f(\mu)$}
\put(48,15){$\approx$}
\put(48.5,17.7){$f$}
\end{overpic}
\caption{The image of $\mu$ under $f$.}
\label{fig:Proof4-1}
\end{figure}

\begin{figure}[!htb]
\centering
\begin{overpic}[width=.75\textwidth]{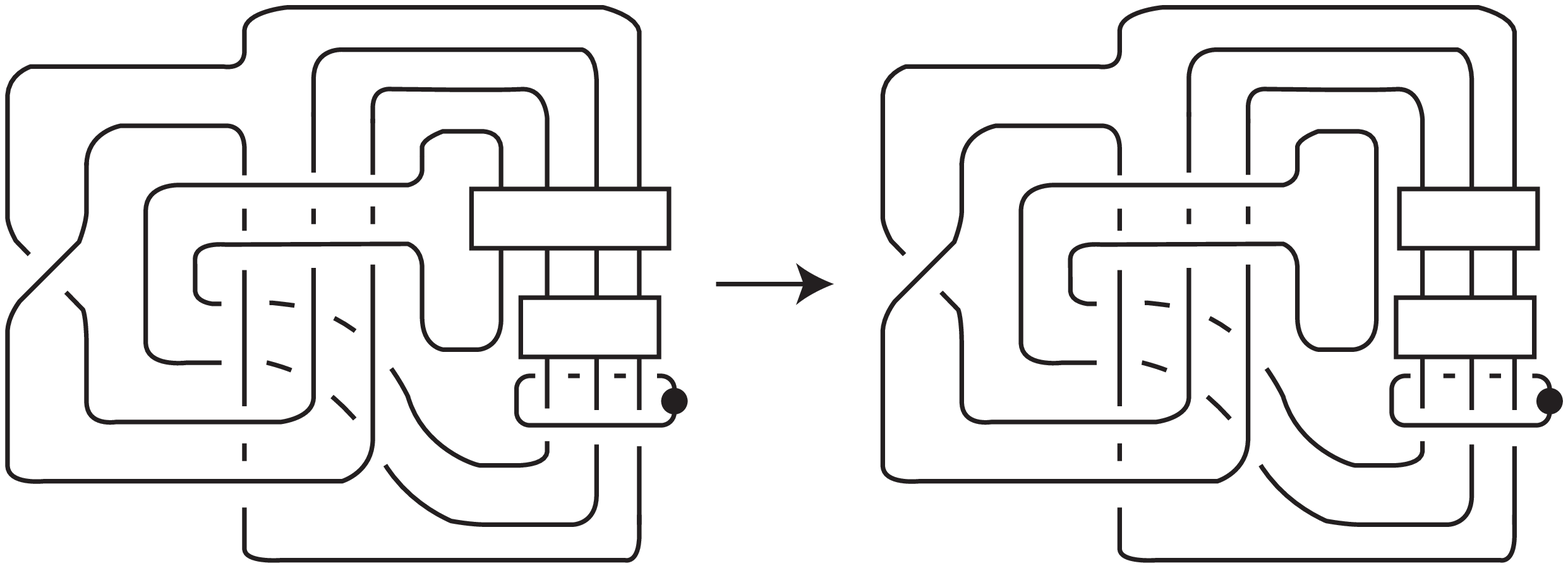}
\put(35.8,21){$1$}
\put(37,14.5){$n$}
\put(40.5,35){$n$}
\put(97,35){$n-2$}
\put(93.3,21){$1$}
\put(93,14.5){$n$}
\put(103,17){$\approx$}
\put(107,17){$B^4$}
\end{overpic}
\caption{The $4$-manifold $W$ is diffeomorphic to $B^4$.}
\label{fig:Proof4-2}
\end{figure}

\begin{figure}[!htb]
\centering
\begin{overpic}[width=.5\textwidth]{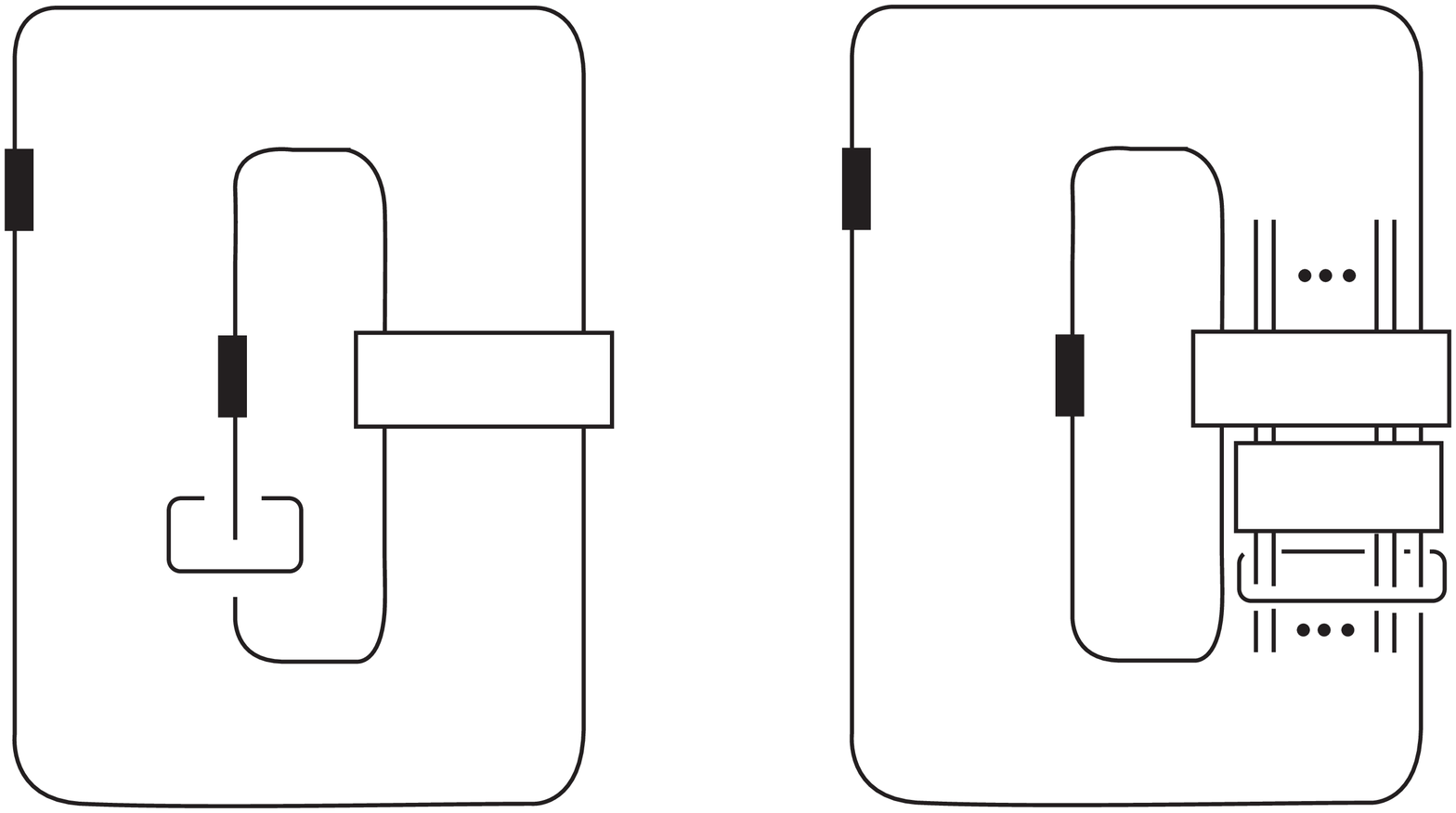}
\put(21,20){$0$}
\put(20.7,15){$\mu$}
\put(32.5,28){$1$}
\put(40,53){$n$}
\put(100.2,16.7){$0$}
\put(100,11.7){$g(\mu)$}
\put(90,28){$1$}
\put(91,21){$n$}
\put(98,53){$n$}
\put(48,27){$\approx$}
\end{overpic}
\caption{The image of $\mu$ under $g$.}
\label{fig:Proof4-3}
\end{figure}

\begin{figure}[!htb]
\centering
\begin{overpic}[width=.5\textwidth]{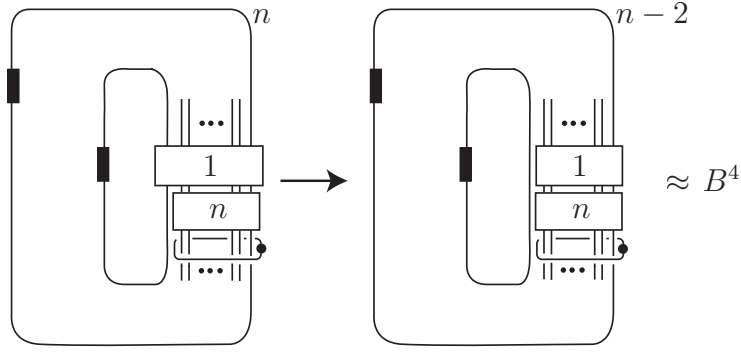}
\put(31.5,27.3){$1$}
\put(32.5,20.6){$n$}
\put(39.5,52){$n$}
\put(91,27.3){$1$}
\put(91,20.6){$n$}
\put(98,52){$n-2$}
\put(106,25){$\approx$}
\put(112,25){$B^4$}
\end{overpic}
\caption{The 4-manifold $W$ is diffeomorphic to $B^4$.}
\label{fig:Proof4-4}
\end{figure}

\begin{rem}
It is important which knot admits a simple annulus presentation. 
An answer is a knot with unknotting number one (see \cite[Lemma 2.2]{AJOT}). 
\end{rem}

\section{Proof of Theorem~\ref{thm:main}}\label{sec:proof}

For a knot $K$, 
we denote  by $\Delta_{K}(t)$  the Alexander polynomial of $K$.
We assume that $\Delta_{K}(t)$ is of the symmetric form 
\[ \Delta_{K}(t) = a_0 + \sum_{i=1}^d a_i(t^i + t^{-i}) \, . \]
We call the integer $d$ the \emph{degree} of $\Delta_{K}(t)$, 
and denote it by $\deg \Delta_{K}(t)$. 

In this section, we define a ``good'' annulus presentation. 
Theorem~\ref{thm:main} will be shown as a typical case of the argument in this section. 
The following technical lemma plays an important role. 

\begin{lem}\label{lem:technical}
Let $n$ be a positive integer. 
Let $K$ be a knot with a good annulus presentation, 
and $K'$ be the knot obtained from $K$ by applying the operation $(*n)$. 
Then 
\begin{enumerate}[{\rm (i)}]
\item 
$K'$ also admits a good annulus presentation, and 
\item 
$\deg \Delta_K(t) < \deg \Delta_{K'}(t)$.
\end{enumerate}
\end{lem}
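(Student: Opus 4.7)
The plan is to treat (i) and (ii) separately, with (ii) carrying all the substance. Since the notion of ``good'' annulus presentation has not yet been spelled out in the excerpt, I would simultaneously propose its definition and verify its invariance under $(*n)$, calibrating the conditions so that the growth of the Alexander polynomial becomes transparent. Minimally, ``good'' should include simplicity (so that Remark~\ref{rem:simple_annulus_presentation} applies), together with some additional control on how the band $b$ sits relative to the annulus $A$ (for example, linking conditions on $b(\partial I \times I)$ with the core of $A$).

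For part (i), the approach is almost purely diagrammatic. Remark~\ref{rem:simple_annulus_presentation} gives a completely explicit picture of $K'$ with its new annulus presentation: the annulus is modified only in a small neighbourhood of $c$, and the band differs from $b$ by winding $n$ extra times around the annulus via the auxiliary band $b_A$. Any goodness condition formulated in terms of local features of the band and annulus can then be read off directly from that figure, and I would verify these conditions one at a time.

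For part (ii), the strategy is to exhibit a natural Seifert surface $F'$ for $K'$ coming from the annulus presentation and to compute (or at least bound below) the degree of the resulting Seifert matrix. The annulus $A$ together with the resolved band $b(I \times I)$ yields a Seifert surface $F$ for $K$. Under $(*n)$, the band is replaced by $b_A$, which wraps $n$ additional times around a parallel copy of $A$; this adds $n$ new handles to $F'$, and the Seifert matrix $V'$ acquires a block
\[
V' \;=\; \begin{pmatrix} V & * \\ * & V_n \end{pmatrix}
\]
whose new diagonal block $V_n$ has size growing linearly in $n$ and a predictable tridiagonal shape dictated by the $n$-fold twisting. Expanding $\det(tV' - V'^T)$ along this block produces a contribution of degree strictly larger than $\deg \Delta_K(t)$.

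The main obstacle is confirming that the degree of $\Delta_{K'}(t)$ genuinely strictly increases, rather than merely that some natural determinant expression does: one must preclude cancellation in the leading coefficient. The definition of ``good'' should be engineered precisely to force this non-cancellation, for instance by requiring a linking or parity condition on the arcs of $b$ that guarantees the leading coefficient of the characteristic polynomial of $V_n$ is $\pm 1$ and is not cancelled by the off-diagonal blocks. Once such a condition is built in, (ii) reduces to an explicit determinant computation, and (i) amounts to checking that the same condition persists under the controlled local modification produced by $(*n)$ — which, by the structural description above, it should.
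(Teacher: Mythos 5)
Your plan leaves the actual content of the lemma unproved, because you never pin down what ``good'' means: both (i) and (ii) are deferred to a definition that is to be ``engineered'' so that the argument works. That is circular as a proof strategy, since the whole point of the lemma is that one fixed set of conditions simultaneously (a) forces the leading coefficient of your determinant not to cancel, so that $\deg\Delta_{K'}$ strictly exceeds $\deg\Delta_K$, (b) is preserved by the operation $(*n)$, and (c) is actually satisfied by a concrete starting knot such as $8_{20}$ (otherwise Theorem~\ref{thm:main} gets nothing). Your Seifert--matrix sketch also has unverified quantitative claims: the new block is not ``dictated by $n$'' alone --- the growth depends on how often the band meets $A$ and the disk $\Sigma$ bounded by $c$ (in the paper's notation the pair $(\delta,\sigma)$ of half-intersection numbers transforms by $\bigl(\begin{smallmatrix} n+1 & n \\ 1 & 1 \end{smallmatrix}\bigr)$, so the degree jump is $n(\delta+\sigma)$, not $n$) --- and a Seifert surface for $K'$ must be built after blowing down the $(-1)$-framed curve $c$, which twists the band; finally, a genus/size count only bounds $\deg\Delta_{K'}$ from above, so the strict lower bound rests entirely on the non-cancellation you explicitly postpone. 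As written, neither (i) nor (ii) is established.

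For comparison, the paper does not use Seifert matrices at all. ``Good'' (Definition~\ref{def:good}) is formulated in terms of the arcs into which $c$ is cut by the flat disk $D$ bounded by the unknot $U$ obtained by shrinking the band: exactly one $(+-)$ and one $(-+)$ arc with prescribed lifts, a sign condition on intersections with $\Sigma$, and $b(I\times\partial I)\cap \mathrm{int}\,A\ne\emptyset$. A surgery description of the infinite cyclic cover of $E(K)$ (obtained from $D\times\R$ by surgering the lifts of $c$) yields $\deg\Delta_K(t)=\#\{(++)\ \text{arcs}\}+1$ (Lemma~\ref{lem:LiftOfArc}), with monicity as a byproduct (Remark~\ref{rem:monic}); this is exactly the leading-coefficient control you need but do not supply. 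Part (i) is then checked arc by arc for $(A)$ and $(T_n)$ (a $(++)$ or $(--)$ arc through $\Sigma$ becomes $n+1$ arcs of the same type, while the $(+-)$ and $(-+)$ arcs are fixed), and part (ii) reduces to the count $\delta'=(n+1)\delta+n\sigma>\delta$ together with Remark~\ref{rem:good}. If you want to rescue your route, the missing work is precisely: state the conditions explicitly, verify their persistence under the explicit picture of Remark~\ref{rem:simple_annulus_presentation}, and prove a monicity/degree formula playing the role of Lemma~\ref{lem:LiftOfArc}.
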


We will prove Lemma~\ref{lem:technical} later. 
Using Lemma~\ref{lem:technical}, we show the following 
which yields Theorem~\ref{thm:main} as an immediate corollary. 

\begin{theorem}\label{thm:main2}
Let $n$ be a positive integer. 
Let $K_{0}$ be a knot with a good annulus presentation 
and $K_{i}$ $( i \ge 1)$ the knot obtained from $K_{i-1}$ by applying the operation $(*n)$. 
Then 
\begin{enumerate}
\item 
$X_{K_0}(n) \approx X_{K_1}(n) \approx X_{K_2}(n)   \approx \cdots$, and 
\item 
the knots $K_{0}, K_{1}, K_{2}, \cdots$ are mutually distinct.
\end{enumerate}
Let $\overline{K}_{i}$ be the mirror image of $K_{i}$. 
Then 
\begin{enumerate}
\setcounter{enumi}{2}
\item 
$X_{\overline{K}_{0}}(-n) \approx X_{\overline{K}_{1}}(-n) \approx X_{\overline{K}_{2}}(-n) \approx \cdots$, and 
\item 
the knots $\overline{K}_{0}, \overline{K}_{1}, \overline{K}_{2}, \cdots$ are mutually distinct.
\end{enumerate}
\end{theorem}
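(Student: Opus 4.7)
The plan is a straightforward induction packaging Lemma~\ref{lem:technical} and Theorem~\ref{thm:diffeo4}, followed by a mirror-image argument for the negative-framing case. First I would show by induction on $i$ that every $K_i$ inherits a good annulus presentation: the base case is the hypothesis on $K_0$, and the inductive step is Lemma~\ref{lem:technical}~(i). Since (as should be built into the definition) a good annulus presentation is in particular simple, Theorem~\ref{thm:diffeo4} applies at each stage and yields a diffeomorphism $X_{K_{i-1}}(n) \approx X_{K_i}(n)$; composing these gives the infinite chain of statement~(1).

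For statement~(2), Lemma~\ref{lem:technical}~(ii) gives the strict inequality $\deg \Delta_{K_{i-1}}(t) < \deg \Delta_{K_i}(t)$ at every step. Hence the sequence of degrees is strictly increasing, so the Alexander polynomials $\Delta_{K_i}(t)$ are pairwise distinct, and a fortiori so are the knots $K_i$.

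Statements~(3) and~(4) are obtained by mirroring. Taking the mirror image of a knot corresponds to reversing the orientation of the ambient $S^3$, which turns a $2$-handle attachment with framing $n$ into one with framing $-n$; concretely, $X_{\overline{K}}(-n)$ is (orientation-reversingly) diffeomorphic to $X_K(n)$. Applying this observation to the chain in~(1) produces the diffeomorphisms asserted in~(3). For~(4), recall that the Alexander polynomial is invariant under mirror image: in the symmetric normalization one has $\Delta_{\overline{K}}(t) = \Delta_K(t^{-1}) = \Delta_K(t)$. Consequently the polynomials $\Delta_{\overline{K}_i}(t)$ still have strictly increasing degrees, so the $\overline{K}_i$ are pairwise distinct.

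The main obstacle is entirely contained in Lemma~\ref{lem:technical}: the preservation of goodness under $(*n)$ is combinatorial but must be verified carefully from the definition of ``good'', and the strict growth $\deg \Delta_K < \deg \Delta_{K'}$ is the genuine analytic input, presumably established by an explicit computation of $\Delta_{K'}$ from a Seifert matrix built out of the annulus presentation and the added $(-1/n)$-framed unknot. Granting that lemma, Theorem~\ref{thm:main2} reduces, as above, to a formal assembly of Theorem~\ref{thm:diffeo4} with the behavior of $\Delta$ and of $2$-handlebodies under mirroring.
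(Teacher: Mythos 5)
Your proposal is correct and follows essentially the same route as the paper: assemble Theorem~\ref{thm:diffeo4} with Lemma~\ref{lem:technical} (goodness implies simple, so the diffeomorphisms chain up, and the strictly increasing degrees of $\Delta_{K_i}$ distinguish the knots), then pass to mirrors using $X_{\overline{K}}(-n) \approx X_K(n)$ and the mirror-invariance of $\deg\Delta$. The only difference is presentational: you phrase the propagation of goodness as an explicit induction, which the paper leaves implicit.
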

\begin{proof}
By the definition (Definition~\ref{def:good}), 
any good annulus presentation is simple. 
Thus, by Theorem~\ref{thm:diffeo4}, we have 
\[ X_{K_0}(n) \approx X_{K_1}(n) \approx X_{K_2}(n)   \approx \cdots \, .\]
By Lemma ~\ref{lem:technical} (i),
each $K_{i}$ $( i \ge 1)$ also admits a good annulus presentation. 
Thus, by Lemma~\ref{lem:technical} (ii), we have 
\[ \deg \Delta_{K_0}(t)  < \deg \Delta_{K_1}(t)  < \deg \Delta_{K_2}(t) < \cdots \, . \]
This implies that the knots $K_{0}, K_{1}, K_{2}, \cdots$ are mutually distinct. 

Since $X_{K_i}(n) \approx X_{\overline{K}_i}(-n)$ 
and $\deg \Delta_{K_i}(t)=\deg \Delta_{\overline{K}_i}(t)$, 
we have
\[ X_{\overline{K}_0}(-n) \approx X_{\overline{K}_1}(-n) \approx X_{\overline{K}_2}(-n)   \approx \cdots \, , \  \text{and}\]
\[ \deg \Delta_{\overline{K}_0}(t)  < \deg \Delta_{\overline{K}_1}(t)  < \deg \Delta_{\overline{K}_2}(t) < \cdots \, .\]
This completes the proof of Theorem~\ref{thm:main2}. 
\end{proof}

\subsection{Good annulus presentation and the Alexander polynomial}

Let $K$ be a knot with a simple annulus presentation $(A, b, c)$. 
Note that the knot $(\partial A \setminus b(\partial I \times I) )\cup b(I \times \partial I)$ is 
trivial\footnote{$K$ is the knot $(\partial A \setminus b(\partial \times I) )\cup b(I \times \partial I)$ in $M_c(-1)$.} in $S^3$ 
if we ignore the $(-1)$-framed loop $c$. 
We denote by $U$ this trivial knot. 
Since $(A,b, c)$ is simple, $U \cup c$ can be isotoped so that 
$U$ bounds a ``flat'' disk $D$ (contained in $\R^2 \cup \{\infty\}$). 
This isotopy, denote by $\varphi_b$, is realized by shrinking the band $b(I \times I)$. 
For example, see Figure~\ref{fig:ExStandard}. 
In the abbreviated form, 
$\varphi_b$ is represented as in Figure~\ref{fig:StandardAbb}. 
Here we note that the linking number of $U$ and $c$ is zero 
since we assumed that $A \cup b(I \times I)$ is orientable (see subsection~\ref{subsec:AP}). 
Let $\Sigma$ be the disk bounded by $c$ as in Figure~\ref{fig:StandardAbb}. 
We assume that $\Sigma$ stays during the isotopy $\varphi_b$. 

\begin{figure}[!htb]
\centering
\begin{overpic}[width=.75\textwidth]{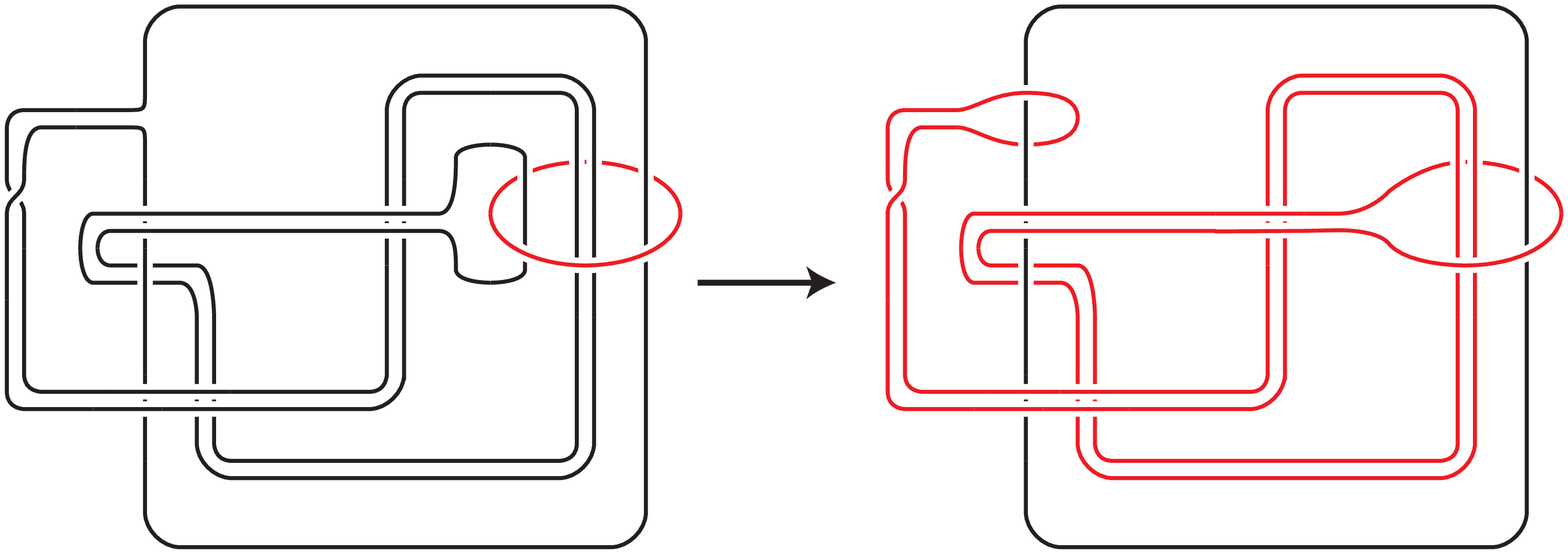}
\put(43,23.5){$\textcolor[cmyk]{0,1,1,0}{c}$}
\put(41,34){$U$}
\put(82.5,13){$\textcolor[cmyk]{0,1,1,0}{c}$}
\put(97.5,34){$U$}
\put(43.8,19){isotopy}
\put(47.5,14.5){$\varphi_b$}
\end{overpic}
\caption{By the isotopy $\varphi_b$ (shrinking the band $b(I \times I)$), 
$U \cup c$ (the left side) is changed to the right side.}
\label{fig:ExStandard}
\end{figure}
\begin{figure}[!htb]
\centering
\begin{overpic}[width=.75\textwidth]{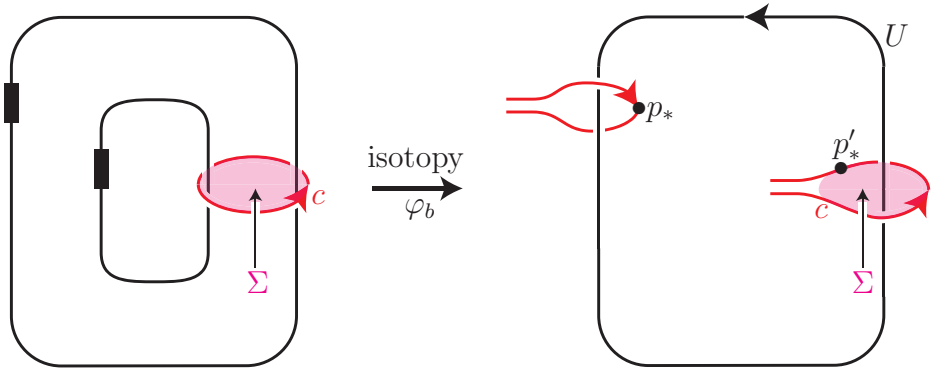}
\put(33,18){$\textcolor[cmyk]{0,1,1,0}{c}$}
\put(87,16.5){$\textcolor[cmyk]{0,1,1,0}{c}$}
\put(94.5,35){$U$}
\put(39,22){isotopy}
\put(43,17){$\varphi_b$}
\put(67.3,27.5){$\bullet$}
\put(89,21){$\bullet$}
\put(69,27.5){$p_*$}
\put(89,23.5){$p_*'$}
\put(26,8.5){\textcolor[cmyk]{0,1,0,0}{$\Sigma$}}
\put(91,8.5){\textcolor[cmyk]{0,1,0,0}{$\Sigma$}}
\end{overpic}
\caption{The isotopy $\varphi_b$ in the abbreviated form of $(A,b,c)$. 
Assume that $\Sigma$ stays during the isotopy.}
\label{fig:StandardAbb}
\end{figure}

After the isotopy $\varphi_b$, cutting along the disk $D$, 
the loop $c$ is separated into arcs whose endpoints are in $D$. 
Furthermore, choosing orientations on $c$ and $U$, these arcs are oriented. 
Unless otherwise noted, 
we choose the orientations of $c$ and $U$ as in Figure~\ref{fig:StandardAbb}. 
These oriented arcs are classified into four types as follows: 
For $p \in c \cap D$, let $\sign(p) = \pm$ 
according to the sign of the intersection between $D$ and $c$ at $p$. 
For an oriented arc $\alpha$, let $p_s$ (resp.~$p_t$) 
be the starting point (resp.~terminal point) of $\alpha$. 
Then we say that $\alpha$ is of \emph{type} $(\sign(p_s) \sign(p_t))$. 
That is, the oriented arc $\alpha$ is of type  $(++)$, $(--)$, $(+-)$, or $(-+)$. 
For example, see Figure~\ref{fig:ExStandard2}.

\begin{figure}[!htb]
\centering
\begin{overpic}[width=.4\textwidth]{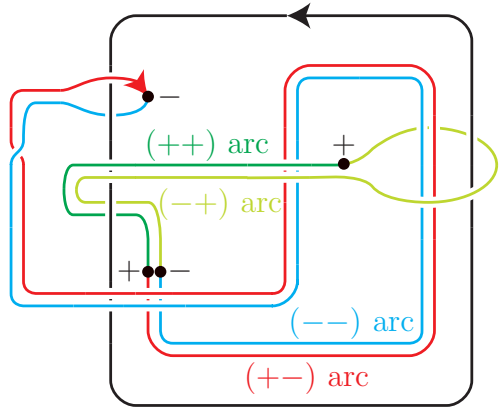}
\put(30.6,61.7){$-$}
\put(65.7,52){$+$}
\put(32.7,26.5){$-$}
\put(22.5,26.5){$+$}
\put(48,5){\textcolor[cmyk]{0,1,1,0}{$(+-)$ arc}}
\put(57,16){\textcolor[cmyk]{1,0,0,0}{$(--)$ arc}}
\put(30.5,40){\textcolor[cmyk]{.3,0,1,0}{$(-+)$ arc}}
\put(28,52){\textcolor[cmyk]{1,0,1,0}{$(++)$ arc}}
\end{overpic}
\caption{The four types of arcs. }
\label{fig:ExStandard2}
\end{figure}

Here we consider the infinite cyclic covering $\tilde E(U)$ of $E(U)$.
Notice that $\tilde E(U)$ consists of infinitely many copies of a cylinder 
obtained from $E(U)$ by cutting along $D$. 
Thus $\tilde E(U)$ is diffeomorphic to $D \times \R \approx \cup_{i \in \Z} \left( D \times [i,i+1] \right)$. 
Each oriented arc is lifted in $\tilde E(U)$ as shown in Figure~\ref{fig:LiftOfArc}. 
Hereafter, for simplicity, we say an arc instead of an oriented arc.

\begin{figure}[!htb]
\bigskip
\begin{overpic}[width=.7\textwidth]{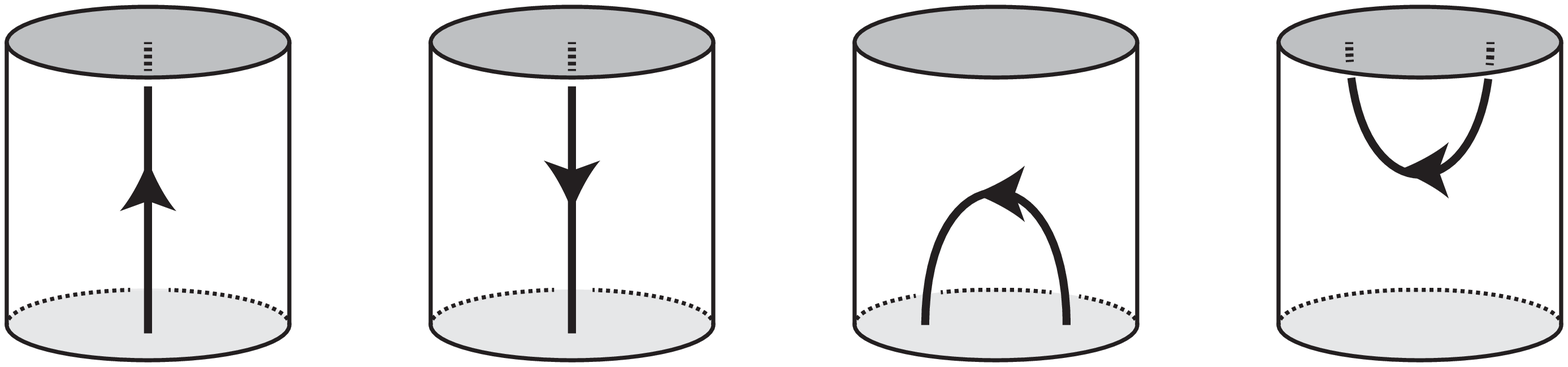}
\put(3,-3.5){$D^2 \times \{ i\}$}
\put(0,24){$D^2 \times \{ i + 1\}$}
\end{overpic}
\medskip
\caption{Lifts of oriented arcs of type $(++), (--), (+-)$, and $(-+)$ respectively. }
\label{fig:LiftOfArc}
\end{figure}
\begin{figure}[!htb]
\begin{overpic}[width=.15\textwidth]{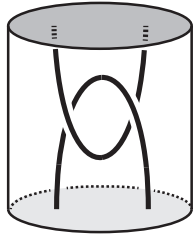}
\end{overpic}
\caption{Lifts of the arcs of type $(+-)$ and $(-+)$ from a good annulus presentation. }
\label{fig:LiftOfArc2}
\end{figure}

\begin{definition}\label{def:good}
We say that a simple annulus presentation $(A, b, c)$ is \emph{good} 
if the set of arcs $\mathcal A$ obtained as above satisfies the following up to isotopy. 
\begin{enumerate}
\item 
$\mathcal A$ contains just one $(+-)$ arc and one $(-+)$ arc, 
and they are lifted as in Figure~\ref{fig:LiftOfArc2}. 
\item 
For $\alpha \in \mathcal A$, 
if $\Sigma \cap \alpha \ne \emptyset$, 
then $\alpha$ is of type $(++)$ (resp.~$(--)$ arc) 
and the sign of each intersection point in $\Sigma \cap \alpha$ is $+$ (resp.~$-$). 
\item 
$b(I \times \partial I) \cap \mathrm{int}A \ne \emptyset$. 
\end{enumerate}
\end{definition}

\begin{rem}\label{rem:good}
For a simple annulus presentation $(A,b,c)$, after the isotopy $\varphi_b$, 
the intersection $c \cap D$ corresponds to 
the intersection $b(I \times \partial I) \cap \mathrm{int} A$ 
and further two points $p_*$ and $p_*'$ depicted in Figure~\ref{fig:StandardAbb}. 
Notice that 
\[ b(I \times \partial I) \cap \mathrm{int} A = \sqcup_i \, b(\{t_i\} \times \partial I) \]  
for some $0 < t_1 < \dots < t_r < 1$. 
For each $i$, $b(\{t_i\} \times \partial I)$ consists of two points whose signs are differ. 
Furthermore, with the orientation as in Figure~\ref{fig:StandardAbb}, we have 
\[ \sign(p_*) = - \, \text{ \, and \, } \sign(p_*') = + \, . \] 
\end{rem}

\begin{ex}\label{ex:GoodIsotopy}
The annulus presentation obtained by applying $\varphi_b$ 
on Figure~\ref{fig:ExStandard2} is not good since the condition (2) does not hold. 
In such a case, changing the position of an intersection as in Figure~\ref{fig:ExStandard3} by an isotopy, 
we can obtain a good annulus presentation. 
We often apply such an argument in the proof of Lemma~\ref{lem:technical}. 
\begin{figure}[!htb]
\centering
\begin{overpic}[width=.9\textwidth]{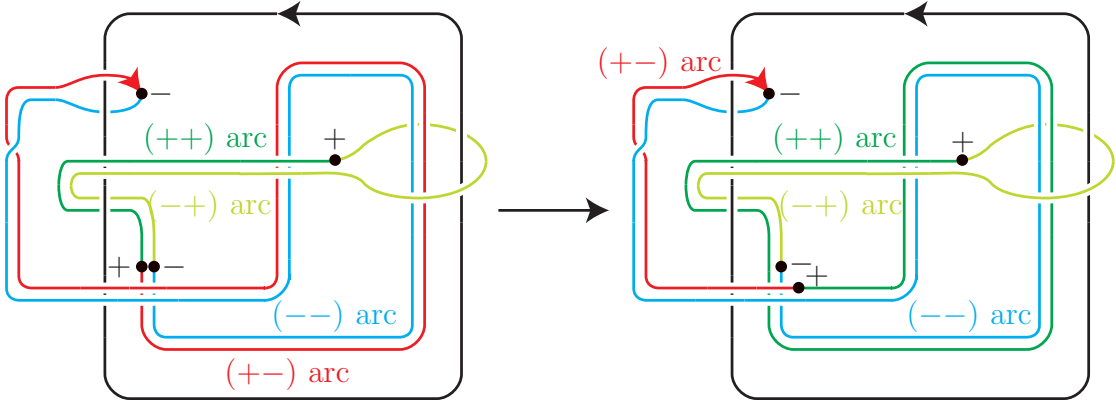}
\put(13.2,27){$-$}
\put(28.8,23){$+$}
\put(14.4,11.6){$-$}
\put(9.5,11.6){$+$}
\put(20,2){\textcolor[cmyk]{0,1,1,0}{$(+-)$ arc}}
\put(24.2,7.2){\textcolor[cmyk]{1,0,0,0}{$(--)$ arc}}
\put(13,17){\textcolor[cmyk]{.3,0,1,0}{$(-+)$ arc}}
\put(12.7,23){\textcolor[cmyk]{1,0,1,0}{$(++)$ arc}}
\put(69.5,27){$-$}
\put(85,22.7){$+$}
\put(70.5,11.8){$-$}
\put(71.9,10.8){$+$}
\put(53.3,29.8){\textcolor[cmyk]{0,1,1,0}{$(+-)$ arc}}
\put(81,7.2){\textcolor[cmyk]{1,0,0,0}{$(--)$ arc}}
\put(69.5,17){\textcolor[cmyk]{.3,0,1,0}{$(-+)$ arc}}
\put(69,23){\textcolor[cmyk]{1,0,1,0}{$(++)$ arc}}
\end{overpic}
\caption{By an isotopy, we move the intersection point of $c \cap D$. }
\label{fig:ExStandard3}
\end{figure}
\end{ex}

Considering a surgery description of the infinite cyclic covering of 
the exterior of $K$, we can easily show the following. 

\begin{lem}\label{lem:LiftOfArc} 
If a knot $K$ admits a good annulus presentation, then 
\begin{align}\label{eq:ArcDeg}
\deg \Delta_K(t) = \# \{ \text{arcs of type } (++) \} + 1 \, .
\end{align}
\end{lem}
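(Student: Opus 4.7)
The plan is to compute the Alexander polynomial of $K$ from a surgery description on the infinite cyclic cover $\tilde E(K)$ of $E(K)$. Since $\lk(U,c) = 0$, the loop $c$ lifts to a $\Z$-orbit of disjoint simple closed curves $\tilde c_k = t^k \tilde c_0$ ($k \in \Z$) in the infinite cyclic cover $\tilde E(U) \cong D^2 \times \R$ of $E(U)$, and $\tilde E(K)$ is obtained from $\tilde E(U)$ by $(-1)$-framed Dehn surgery on every $\tilde c_k$. Because $\tilde E(U)$ is contractible, $H_1\bigl(\tilde E(U) \setminus \bigsqcup_k N(\tilde c_k)\bigr)$ is a free $\Z[t,t^{-1}]$-module of rank one generated by the meridian $\mu = \tilde\mu_0$, and the surgeries impose $\Z[t,t^{-1}]$-translates of the single relation $\tilde\lambda_0 - \tilde\mu_0 = 0$. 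Expanding the zero-framed longitude as $[\tilde\lambda_0] = \sum_{k \ne 0} \lk(\tilde c_0, \tilde c_k)\, t^k\, \mu$ gives
\[ \Delta_K(t) \;\doteq\; -1 + \sum_{k \ne 0} \lk(\tilde c_0, \tilde c_k)\, t^k, \]
and since $\lk(\tilde c_0, \tilde c_k) = \lk(\tilde c_0, \tilde c_{-k})$, the degree of $\Delta_K(t)$ equals the largest $k > 0$ for which $\lk(\tilde c_0, \tilde c_k) \ne 0$.

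Next I would analyze which linking numbers are nonzero using the arc classification of Figure~\ref{fig:LiftOfArc}. Arcs of type $(++)$ and $(-+)$ raise the box level of the traversal by one, while arcs of type $(--)$ and $(+-)$ lower it by one; counting arcs in the good case gives $n_{++} = n_{--}$ and $n_{+-} = n_{-+} = 1$, so $\tilde c_0$ takes exactly $n_{++}+1$ up-steps and $n_{++}+1$ down-steps. Using Definition~\ref{def:good}---the uniqueness of the $(+-)$ and $(-+)$ arcs in (1) together with the sign condition (2) on $\Sigma \cap \alpha$---I would verify that the cyclic order of arcs along $c$ forces $\tilde c_0$ to attain its maximum box level $n_{++}+1$ only on the $(+-)$ arc and its minimum $-(n_{++}+1)$ only on the $(-+)$ arc (up to a global shift). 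Consequently $\tilde c_0 \cap \tilde c_k = \emptyset$ for $|k| > n_{++}+1$, so $\lk(\tilde c_0, \tilde c_k) = 0$ there.

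Finally, to identify the top coefficient, at the extremal common box level $n_{++}+1$ the $(+-)$ arc of $\tilde c_0$ and the $(-+)$ arc of $\tilde c_{n_{++}+1}$ share a single box and form the interlocking pair of Figure~\ref{fig:LiftOfArc2}, contributing $\pm 1$ to $\lk(\tilde c_0, \tilde c_{n_{++}+1})$; since no other arcs reach that level, the leading coefficient is nonzero and $\deg \Delta_K(t) = n_{++}+1$. The main obstacle is the extremal-level claim of the previous step: verifying from the good conditions alone that the unique $(+-)$ and $(-+)$ arcs are the only arcs attaining the extremal box levels, and that their lifts interlock nontrivially. This reduces to a local combinatorial check in the top box of the tower, using condition (2), whose raison d'\^etre is precisely to enforce this monotone structure around the unique $(+-)$ and $(-+)$ arcs.
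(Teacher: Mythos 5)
Your overall route is the same as the paper's: the proof there is precisely the one\nobreakdash-line appeal to a surgery description of the infinite cyclic cover (Rolfsen, Chapter~7), namely $\tilde{E}(K)$ is obtained from $\tilde{E}(U)\approx D^2\times\R$ by $(-1)$-surgery on the lifts $\tilde{c}_k$, and $\deg\Delta_K(t)$ is read off from the linking numbers $\ell_k=\lk(\tilde{c}_0,\tilde{c}_k)$. One algebraic slip first: the surgery relation upstairs uses the lift of the $(-1)$-framed pushoff of $c$, and since the preferred longitude of $c$ in $S^3$ does not lift to the preferred framing of $\tilde{c}_0$, its $\tilde{\mu}_0$-coefficient is $-1-\sum_{k\ne 0}\ell_k$; the order of $H_1(\tilde{E}(K))$ is therefore $-1+\sum_{k\ne 0}\ell_k(t^k-1)$, not $-1+\sum_{k\ne 0}\ell_k t^k$ (your version violates $\Delta_K(1)=\pm1$). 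This does not affect the statement ``$\deg\Delta_K(t)$ equals the largest $k$ with $\ell_k\ne 0$,'' so it is harmless for the lemma, but it should be fixed.

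The genuine gap is at the decisive combinatorial step, which you flag yourself as ``the main obstacle'' and do not carry out. Note that the vanishing $\ell_k=0$ for $k>n_{++}+1$ is automatic: the level walk has exactly $n_{++}+1$ up-steps, so its total spread (max level minus min level) is at most $n_{++}+1$, and curves whose slab ranges are disjoint are separated by a level disk. Hence your sketch only yields $\deg\Delta_K(t)\le n_{++}+1$. The content of \eqref{eq:ArcDeg} is the lower bound $\ell_{n_{++}+1}=\pm1$, which forces the walk to be monotone (all up-steps consecutive), with the unique $(+-)$ arc alone in the top slab of $\tilde{c}_0$, the unique $(-+)$ arc alone in the bottom slab of $\tilde{c}_{n_{++}+1}$, and these two lifts clasping as in Figure~\ref{fig:LiftOfArc2} (this is also what gives monicity, cf.~Remark~\ref{rem:monic}). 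Your formulation of the needed claim is moreover internally inconsistent: a maximum level $n_{++}+1$ together with a minimum level $-(n_{++}+1)$ would require $2(n_{++}+1)$ up-steps, twice what you counted; what is needed is max minus min equal to $n_{++}+1$. Finally, you propose to derive this rigidity from condition (2) of Definition~\ref{def:good}, but the remark following the lemma states that conditions (2) and (3) are not used here; the relevant hypothesis is condition (1), i.e.\ the prescribed lifting of the $(+-)$ and $(-+)$ arcs, together with the structure of $c\cap D$ for a simple annulus presentation described in Remark~\ref{rem:good}. As written, then, the proposal follows the paper's approach but leaves the equality in \eqref{eq:ArcDeg}, the actual assertion of the lemma, unproved.
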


For the details of a surgery description of $\tilde{E}(K)$ and the Alexander polynomial, 
we refer the reader to Rolfsen's book~\cite[Chapter 7]{Rolfsen}. 

\begin{rem}
To show Lemma~\ref{lem:LiftOfArc}, 
we do not need the conditions (2) and (3) in Defitnition~\ref{def:good}. 
These conditions are used to prove Lemma~\ref{lem:technical}. 
\end{rem}

\begin{rem}\label{rem:monic}
If a knot $K$ admits a good annulus presentation, 
then we can see that $\Delta_K(t)$ is monic. 
\end{rem}

Now we are ready to prove the main result in this paper. 

\begin{proof}[Proof of Theorem~\ref{thm:main}]
The case where $n=0$ was proved in \cite{AJOT}. 
We can check that the simple annulus presentation of the knot $8_{20}$ 
in Figure~\ref{fig:Def-AP} is good. 
Thus the proof for the case where $n \neq 0$ is obtained by Theorem~\ref{thm:main2} immediately. 
\end{proof}

\subsection{Proof of Lemma~\ref{lem:technical}}

We start the proof of Lemma~\ref{lem:technical}. 
Let $(A, b, c)$ be a good annulus presentation of a knot $K$. 
Recall that the operation $(*n)$ is a composition of the two operations $(A)$ and $(T_n)$ 
for an annulus presentation. 
Let $(A, b_A, c)$ be the annulus presentation obtained from $(A, b, c)$ 
by applying the operation $(A)$, 
and $(A, b', c)$ the annulus presentation obtained from $(A, b_A, c)$ 
by applying the operation $(T_n)$. 
That is,
\[ (A, b, c) \overset{(A)}{\longrightarrow }  (A, b_A, c)  \overset{(T_n)}{\longrightarrow } (A, b', c). \]
Note that $K'$ admits the annulus presentation $(A, b', c)$.

First we show that $(A, b_A,c)$ is good. 
The operation $(A)$ preserves the number of arcs and type of each arc. 
Furthermore we can suppose that the $(+-)$ arc and $(-+)$ arc are fixed by the operation $(A)$ up to isotopy. 
Therefore $(A, b_A, c)$ satisfies the condition (1) of Definition~\ref{def:good}. 
We can also check that $(A, b_A, c)$ satisfies 
the conditions (2) and (3) of Definition~\ref{def:good}. 
Therefore $(A, b_A, c)$ is good. 

Next we show that $(A, b', c)$ is good. 
The operation $(T_n)$ may increase the number of arcs. 
Indeed a $(++)$ (resp.~$(--)$) arc through $\Sigma$ is changed to $n+1$ $(++)$ (resp.~$(--)$) arcs since $(A, b_A, c)$ is good, 
in particular, a $(++)$ arc (resp.~$(--)$ arc) intersects $\Sigma$ 
positively (resp.~negatively). 
Note that the $(+-)$ arc and the $(-+)$ arc are fixed by the operation $(T_n)$. 
Hence $(+-)$ arcs and $(-+)$ arcs do not produced by the operation $(T_n)$. 
Therefore $(A, b', c)$ satisfies the condition (1). 
We can also check that $(A, b',c)$ satisfies the conditions (2) and (3). 
Therefore $(A, b',c)$ (of $K'$) is good. 
This completes the proof of the claim (i) of Lemma~\ref{lem:technical}. 

Let $ \delta = \# \left(A \cap b(I \times \partial I) \right) / 2$ and 
$\sigma = \# \left( \Sigma \cap b(I \times \partial I) \right) / 2$. 
Then we see that 
\[ \# (A \cap b_A (I \times \partial I)) /2 = \delta \, , \qquad 
\# (\Sigma \cap b_A(I \times \partial I)) /2 = \sigma + \delta \, . \]
Then we have 
\begin{align*}
\# (A \cap b'(I \times \partial I)) /2 
& = \# (A \cap b_A(I \times \partial I))/2 + n \cdot \# (\Sigma \cap b_A(I \times \partial I)) /2 \\ 
&= (n+1) \delta + n \sigma \, , 
\end{align*}
and 
\begin{align*}
\# (\Sigma \cap b'(I \times \partial I)) 
&= \# (\Sigma \cap b_A(I \times \partial I)) \, .
\end{align*}
These are equivalent to 
\begin{align*}
\begin{pmatrix}
\delta' \\ 
\sigma'
\end{pmatrix}
= 
\begin{pmatrix}
n+1 & n \\ 
1 & 1 
\end{pmatrix}
\begin{pmatrix}
\delta \\ 
\sigma
\end{pmatrix}.  
\end{align*}
Since $n \ge 1$ and $\delta \ge 1$, we have 
\begin{align}\label{eq:AlphaIneq}
\delta < \delta' \, . 
\end{align}
By the condition that $(A, b, c)$ and $(A, b',c)$ is good, 
and by Remark~\ref{rem:good}, we see that 
\[ \delta = \# \set{(++)\text{ arcs of }(A, b, c)} \, , \qquad 
\delta' = \# \set{(++)\text{ arcs of }(A, b', c)} \, . \] 
Therefore, by Lemma~\ref{lem:LiftOfArc}, we have 
\begin{align}\label{eq:DegSing}
\deg \Delta_K = \delta + 1 \, ,  \quad \deg \Delta_{K'} = \delta' + 1 \, . 
\end{align}
By \eqref{eq:AlphaIneq} and \eqref{eq:DegSing}, we have 
$\deg \Delta_{K}(t) < \deg \Delta_{K'}(t)$. 
This completes the proof of the claim (ii) of Lemma~\ref{lem:technical}, 
and thus, the proof of Lemma~\ref{lem:technical}.

\appendix
\section{Potential application}\label{sec:Cabling-conjecture} 

We introduce a potential application of our technique to the cabling conjecture. 

\begin{conj}[cabling conjecture, \cite{GS}]\label{conj:cabling}
Let $K$ be a knot in $S^3$. 
If $M_{K}(\gamma)$ is a reducible manifold for some integer $\gamma$, 
then $K$ is a $(p, q)$-cable of a knot and $\gamma=pq$. 
\end{conj}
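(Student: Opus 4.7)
The plan is to leverage Theorem~\ref{thm:diffeo3} as a structural constraint on potential counterexamples to Conjecture~\ref{conj:cabling}. Suppose $K$ is a knot admitting an annulus presentation $(A,b,c)$ and $M_K(\gamma)$ is reducible for some integer $\gamma$. Iterating the operation $(*\gamma)$ produces an infinite sequence $K=K_0, K_1, K_2, \ldots$ with $M_{K_i}(\gamma) \approx M_K(\gamma)$; since reducibility is a homeomorphism invariant, every $M_{K_i}(\gamma)$ is reducible. Under the conjecture, each $K_i$ would then be a $(p_i,q_i)$-cable with $p_i q_i = \gamma$, a very rigid condition on an infinite family generated by a single combinatorial operation.

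The first concrete step I would take is to analyze whether $(*\gamma)$ preserves the cabling structure. If one could exhibit a cable knot $K$ with an annulus presentation such that $(*\gamma)K$ is \emph{not} a cable knot, and $M_K(\gamma)$ happens to be reducible, this would immediately furnish a counterexample. Conversely, if $(*\gamma)$ always carries cable knots to cable knots of a compatible slope, then the conjecture restricted to knots with annulus presentations would reduce to a finite list of ``seed'' cables which one could inspect directly.

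Next, I would attempt to enumerate those knots $K$ admitting annulus presentations for which $M_K(\gamma)$ might be reducible. Since any such knot arises from the Hopf link by a single band surgery (see Section~\ref{subsec:AP}), the class has a tractable combinatorial structure indexed by the band $b$ and the sign $\varepsilon$. For each candidate $(A,b,c)$ and each integer $\gamma$, the Kirby diagrams appearing in the proof of Theorem~\ref{thm:diffeo3} (Figures~\ref{fig:HM1} and \ref{fig:HM3}) give an explicit handle presentation of $M_K(\gamma)$ from which $H_1$, the JSJ decomposition, and potential $S^2\times S^1$ summands can be read off.

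The main obstacle will be the apparent absence of any known example of a knot admitting both an annulus presentation and a reducible nonzero integer surgery. Until such a knot is produced the conditional argument above is vacuous, so the concrete immediate problem is either to construct an example--in which case Theorem~\ref{thm:diffeo3} would yield an infinite family of cable knots sharing the same reducible surgery, placing severe constraints on the possible cabling patterns--or to prove that no such example exists, which itself would constitute a nontrivial partial result toward Conjecture~\ref{conj:cabling} within this class.
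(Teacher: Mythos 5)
There is no proof to compare against: the statement you were asked about is the cabling conjecture itself, which the paper does not prove and which remains open. The paper merely cites it from Gonz\'alez-Acu\~na--Short and, in Appendix~\ref{sec:Cabling-conjecture}, discusses a \emph{potential application} of Theorem~\ref{thm:diffeo3} --- namely that if a $(p,q)$-cable of a knot admitted an annulus presentation and the knot obtained by $(*pq)$ were hyperbolic, one would get a counterexample; the paper then shows (Lemma~\ref{lem:trefoil}, via the signature obstruction and the Ozsv\'ath--Szab\'o characterization of trefoil surgeries) that the only torus knots admitting annulus presentations are the unknot and the trefoil, so this route does not currently produce anything, and the question is left open.

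Your proposal is likewise not a proof. Its first paragraph assumes the conjecture in order to derive constraints on the family $K_0, K_1, K_2, \dots$, which can never establish the conjecture; the remaining paragraphs are a research plan (analyze whether $(*\gamma)$ preserves cabling, enumerate knots with annulus presentations and reducible surgeries), and you yourself concede that ``until such a knot is produced the conditional argument above is vacuous.'' So the genuine gap is total: no step of the argument proves the statement, and indeed no argument could be expected to, since the statement is an open conjecture. What you have written is closer in spirit to the paper's appendix --- using Theorem~\ref{thm:diffeo3} to hunt for counterexamples or to constrain the class of knots with annulus presentations and reducible surgeries --- than to a proof, and it should be presented as such, not as a solution. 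Note also one directional slip: producing an infinite family of knots sharing a reducible surgery would, under the conjecture, force all of them to be cables, which is evidence \emph{against} the conjecture being easy to verify in this class, not a mechanism for proving it.
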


It is known that this conjecture is true for many knots, 
in particular, torus knots and satellite knots. 
Therefore if any hyperbolic knot admits no reducible surgery, 
then Conjecture~\ref{conj:cabling} is true. 
For details, we refer the reader to \cite{MO}, \cite{Bo}. 
Here we explain a potential application of Theorem~\ref{thm:diffeo3} 
to Conjecture~\ref{conj:cabling}. 
Suppose that $K$ is a $(p, q)$-cable of a knot admitting an annulus presentation, 
and $K'$ the knot obtain from $K$ by the operation ($*{pq}$). 
Then $M_{K'}(pq)$ is a reducible manifold 
since $M_{K}(pq)$ is reducible and $M_{K}(pq) \approx M_{K'}(pq)$ 
by Theorem~\ref{thm:diffeo3}. 
If $K'$ is a hyperbolic knot, 
then it is a counterexample to the cabling conjecture. 
Therefore it is interesting to determine whether 
a $(p, q)$-cable of a knot admits an annulus presentation or not.

Recall that a $(p,q)$-torus knot is a $(p,q)$-cable of the trivial knot. 

\begin{lem}\label{lem:trefoil}
A torus knot admits an annulus presentation 
with $\varepsilon = -1$ (resp.~$\varepsilon = 1$) 
if and only if it is the unknot or the negative (reps.~positive) trefoil knot .
\end{lem}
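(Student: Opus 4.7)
My plan is a $4$-genus bound to reduce to unknot or trefoil, followed by a signature computation that couples the chirality to the sign of $\varepsilon$. For the first step, I would observe that any knot $K$ admitting an annulus presentation $(A,b,c)$ satisfies $g_4(K) \le 1$. The surface $F := A \cup b(I \times I)$ lies in $M_c(\varepsilon) \approx S^3$ and is orientable with $\chi(F) = -1$ and single boundary component $K$. Any ribbon singularities are resolved by pushing the interior of $b(I \times I)$ slightly into $B^4$, producing a smoothly and properly embedded once-punctured torus in $B^4$ with boundary $K$. By the Milnor conjecture (Kronheimer--Mrowka), $g_4(T(p,q)) = (|p|-1)(|q|-1)/2$, so a torus knot with $g_4 \le 1$ must be the unknot or a $(\pm 2, \pm 3)$ trefoil.

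For the second step, I would distinguish the two trefoils via the signature. For a trefoil $K$ with an annulus presentation of framing $\varepsilon$, one may choose the presentation so that $F$ is an embedded once-punctured torus Seifert surface in $S^3$. Take a Seifert basis $\{\alpha, \beta\}$ of $H_1(F)$ with $\alpha$ the core of $A$. Since $A$ is planar, $\alpha$ has framing $0$ from $A$ in $S^3$, and since $\lk(\alpha, c) = \pm 1$ with $c$ being $\varepsilon$-framed, the Rolfsen twist formula gives $V(\alpha, \alpha) = -\varepsilon$ in $M_c(\varepsilon) \approx S^3$. Writing $V = \bigl(\begin{smallmatrix}-\varepsilon & b \\ b - 1 & c\end{smallmatrix}\bigr)$ with $V - V^T$ the standard symplectic form on the once-punctured torus, the trefoil Alexander polynomial $\Delta_K(t) = t - 1 + t^{-1}$ forces $c = -\varepsilon(1 + b(b-1))$. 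A direct computation then yields $\det(V + V^T) = 3 > 0$ and $\mathrm{tr}(V + V^T) = -2\varepsilon(2 + b(b-1))$, whose sign is $-\varepsilon$ for every $b \in \Z$. Hence both eigenvalues of $V + V^T$ have sign $-\varepsilon$, so $\sigma(K) = -2\varepsilon$. Consequently $\varepsilon = -1$ (resp.\ $\varepsilon = +1$) forces $K$ to be the negative (resp.\ positive) trefoil.

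For existence, the unknot trivially admits annulus presentations with either sign, and the negative trefoil admits one with $\varepsilon = -1$ by attaching a once-twisted band between the two boundary components of $A$; the positive trefoil case with $\varepsilon = +1$ follows by mirror reflection. The main obstacle will be verifying the rigidity $V(\alpha, \alpha) = -\varepsilon$: this requires careful bookkeeping of sign conventions in the Rolfsen twist formula, together with the justification that for a trefoil the annulus presentation can be arranged to be ribbon-singularity-free so that $F$ is an honest Seifert surface in $S^3$, rather than only an immersed surface pushed into $B^4$ to establish the $4$-genus bound.
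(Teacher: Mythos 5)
Your first step is fine: pushing the ribbon singularities of $A\cup b(I\times I)$ into the $4$-ball does give $g_4(K)\le 1$ for any knot with an annulus presentation (the paper itself uses this observation in its final remark), and Kronheimer--Mrowka then leaves only the unknot and the two trefoils -- though this is heavier machinery than needed, since $|\sigma(K)|\le 2g_4(K)$ already rules out all torus knots other than the trefoils. The genuine gap is in your second step, and it is exactly the point you flag yourself: the Seifert-form computation with $V(\alpha,\alpha)=-\varepsilon$ only makes sense if $F=A\cup b(I\times I)$ is \emph{embedded} in $M_c(\varepsilon)\approx S^3$, i.e.\ if the band can be arranged to miss $\mathrm{int}\,A$. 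Nothing in the definition of an annulus presentation guarantees this, and you give no argument that an arbitrary annulus presentation of a trefoil can be so modified; once ribbon singularities are present the surface is only immersed in $S^3$ (or embedded only after pushing into $B^4$), and there is no Seifert form to compute. Since this embedded-surface claim is what carries the entire chirality statement -- the very content that distinguishes $\varepsilon=-1$ from $\varepsilon=+1$ -- the proof as proposed is incomplete. (The algebra itself, $c=-\varepsilon(1+b(b-1))$, $\det(V+V^T)=3$, $\operatorname{tr}(V+V^T)=-2\varepsilon(2+b(b-1))$, is correct \emph{given} the embedded surface.)

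The paper avoids this issue entirely by a more elementary route that couples chirality to $\varepsilon$ without ever needing an embedded Seifert surface: a knot with an annulus presentation is obtained from the Hopf link $H$ (negative for $\varepsilon=-1$) by a single band surgery, because in $M_c(\varepsilon)\approx S^3$ the curves $\partial A$ become a Hopf link whose sign is determined by $\varepsilon$, and $K$ is the result of the band move along $b$. A band surgery changes the signature by at most one, so $|\sigma(T)-\sigma(H)|\le 1$, giving $0\le\sigma(T)\le 2$ when $\varepsilon=-1$; among torus knots this forces the unknot or the negative trefoil, and explicit presentations give the converse. If you want to salvage your argument, the cleanest fix is to replace your Seifert-form step by this band-surgery signature estimate, which uses only the boundary data $\partial A\cup b$ and is insensitive to ribbon singularities of the band with $\mathrm{int}\,A$.
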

\begin{proof} 
We only show the case where $\varepsilon = -1$ 
since the proof for the case where $\varepsilon = 1$ is achieved in a similar way. 
Let $T$ be a torus knot which admits 
an annulus presentation and $H$ the negative Hopf link.
Then  $T$ and $H$
are related by a single band surgery.
Therefore 
\[ |\sigma(T) - \sigma(H)| \le 1.\]
That is, 
\[ 0 \le \sigma(T)  \le 2.\]
This implies that $T$ is the unknot or the negative trefoil knot.
On the other hand, 
the unknot and the negative trefoil knot have annulus presentations,
see Figure~\ref{fig:UnknotTrefoil}.
\end{proof}

\begin{figure}[!htb]
\centering
\begin{overpic}[width=.7\textwidth]{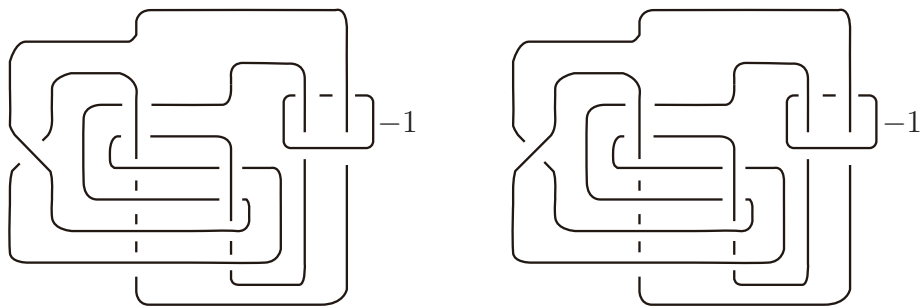}
\put(42.5,20){$-1$}
\put(100.5,20){$-1$}
\end{overpic}
\caption{Annulus presentations of the unknot and the negative trefoil knot.}
\label{fig:UnknotTrefoil}
\end{figure}


Let $K$ be the unknot (resp.~the negative trefoil knot). 
If $M_{K}(\gamma) \approx M_{K'}(\gamma)$ 
for some knot $K'$ and an integer $\gamma$, 
then $K'$ is the unknot (resp.~the trefoil knot), see~\cite{OS}. 
Therefore, by using an annulus presentation of the trivial knot, 
we can not obtain a counterexample of the cabling conjecture 
by using Theorem~\ref{thm:diffeo3} unfortunately. 
Then we propose the following question.

\begin{ques}
Let $K$ be a $(p,q)$-cable knot of a non-trivial knot.
Then does $K$ admit any annulus presentation? 
\end{ques}

\begin{rem}
If the $4$-ball genus of a knot $K$ is greater than 
one, 
then $K$ does not admit any annulus presentations.
Therefore, for example, 
the (2,1)-cable of the trefoil knot does not admit
any annulus presentations.
On the other hand, 
it is not known whether the $(2,1)$-cable of the figure-eight  admits
an annulus presentation or not.
\end{rem}


\begin{thebibliography}{100}


\bibitem{AJOT} 
T. Abe, I. D. Jong, Y. Omae, and M. Takeuchi,
\emph{Annulus twist and diffeomorphic 4-manifolds}, 
Math. Proc. Cambridge Philos. Soc.
\textbf{155} (2013), 219--235.

\bibitem{AT} 
T. Abe and M. Tange,
\emph{A construction of slice knots via annulus twists}, \\
 arXiv:1305.7492v2 [math.GT], preprint.

\bibitem{Ak1}
S. Akbulut,
\emph{Knots and exotic smooth structures on $4$-manifolds},
J. Knot Theory Ramifications \textbf{2} (1993), no. 1, 1--10. 

\bibitem{Ak2}
S.~Akbulut,
\emph{On $2$-dimensional homology classes of $4$-manifolds},
Math. Proc. Cambridge Philos. Soc. \textbf{82} (1977), no. 1, 99--106.

\bibitem{AkbulutBook}
S.~Akbulut,
\emph{$4$-manifolds}, draft of a book (2012), \\ 
available at {\tt http://www.math.msu.edu/\~{}akbulut/papers/akbulut.lec.pdf}
 
\bibitem{Bo}
S. Boyer, 
\emph{Dehn surgery on knots}, Chapter 4 of the Handbook of
Geometric Topology, R.J. Daverman, R.B. Sher, ed., Amsterd
am, Elsevier, 2002.

\bibitem{Cerf}
J.~Cerf,
\emph{Sur les diffeomorphismes de la sphere de dimension trois $(\Gamma_{4}=0)$},
Lecture Notes in Mathematics, No. 53, Springer-Verlag, Berlin-New York (1968) xii+133 pp. 

\bibitem{GS}
F. Gonz\'alez-Acu\~{n}a and H. Short, 
\emph{Knot surgery and primeness}, 
Math. Proc. Cambridge Philos. Soc. \textbf{99} (1986), no. 1, 89--102.

\bibitem{Kirby}
R.~Kirby, 
\emph{Problems in Low-Dimensional Topology},
AMS/IP Stud. Adv. Math., {\bf 2}(2), Geometric topology (Athens, GA, 1993), 
35--473, Amer. Math. Soc., Providence, RI, 1997.

\bibitem{LO}
J.~Luecke and J.~Osoinach, 
\emph{Infinitely many knots admitting the same integer surgery}, 
arXiv:1407.1529. 
 
\bibitem{MO}
T.~Mrowka and P.~Ozsvath,
\emph{Low Dimensional Topology}.

\bibitem{Osoinach}
J.~Osoinach 
\emph{Manifolds obtained by surgery on an infinite number of knots in $S^3$},
Topology \textbf{45} (2006), 725--733. 


\bibitem{OS}
P.~Ozsvath and Z.~Szabo, 
\emph{The Dehn surgery characterization of the trefoil and the figure eight knot},
math.GT/0604079.

\bibitem{Rolfsen}
D. Rolfsen, 
\emph{Knots and Links}, 
Mathematics Lecture Series, No. 7. Publish or Perish, Inc., Berkeley, Calif., 1976.

\end{thebibliography}
\end{document}